\def\BibTeX{{\rm B\kern-.05em{\sc i\kern-.025em b}\kern-.08em
    T\kern-.1667em\lower.7ex\hbox{E}\kern-.125emX}}
\newtheorem{proposition}{Proposition}
\newtheorem{assumption}{Assumption}
\newtheorem{definition}{Definition}
\newtheorem{theorem}{Theorem}
\newtheorem{lemma}{Lemma}
\newtheorem{remark}{Remark}
\DeclareMathOperator*{\argmin}{\arg\!\min}
\newcommand{\addcite}[0]{\ifthenelse{\boolean{showcomments}}
{\textcolor{purple}{(add cite(s)) }}{}}%
\title{Sequential QCQP for Bilevel Optimization with Line Search}
\author{Sina Sharifi, Erfan Yazdandoost Hamedani, Mahyar Fazlyab
%
\thanks{S. Sharifi and M. Fazlyab are with the Department of Electrical and Computer Engineering at Johns Hopkins University, Baltimore, MD 21218, USA.
{\tt\small \{sshari12, mahyarfazlyab\}@jhu.edu}}%
\thanks{E. Yazdandoost Hamedani is with the Department of Systems and Industrial Engineering, The University of Arizona, Tucson, AZ 85721, USA
        {\tt\small erfany@arizona.edu}}%
}
\newcommand{\blue}[1]{{\color{black} \noindent #1}}
\begin{document}
\maketitle
\thispagestyle{empty}
\pagestyle{empty}

\begin{abstract}
Bilevel optimization involves a hierarchical structure where one problem is nested within another, leading to complex interdependencies between levels. We propose a single-loop, tuning-free algorithm that guarantees anytime feasibility, i.e., approximate satisfaction of the lower-level optimality condition, while ensuring descent of the upper-level objective. At each iteration, a convex quadratically-constrained quadratic program (QCQP) with a closed-form solution yields the search direction, followed by a backtracking line search inspired by control barrier functions to ensure safe, uniformly positive step sizes. The resulting method is scalable, requires no hyperparameter tuning, and converges under mild local regularity assumptions. \blue{We establish an $\mathcal{O}(1/k)$ ergodic convergence rate in terms of a first-order stationary metric }
and demonstrate the algorithm’s effectiveness on representative bilevel tasks.
\end{abstract}
\begin{IEEEkeywords}
Bilevel Optimization, Control Barrier Functions, QCQP, Line Search
\end{IEEEkeywords}


\section{Introduction}

\IEEEPARstart{I}{n} this paper, we introduce a novel iterative algorithm for solving bilevel optimization (BLO) problems of the form:
\begin{align}\label{eq:BLO}\tag{BLO}
    f^\star \leftarrow 
    \min_{x,y \in \mathbb{R}^{n+m}} \; f(x,y) 
    \text{ s.t.} \ y \in \argmin_{y \in \mathbb{R}^m} \; g(x,y),
\end{align}
where \( f: \mathbb{R}^n \times \mathbb{R}^m \to \mathbb{R} \) and \( g: \mathbb{R}^n \times \mathbb{R}^m \to \mathbb{R} \) denote the upper- and lower-level objective functions, respectively. 
Despite the wide applicability of \eqref{eq:BLO} in various machine learning (e.g., hyperparameter optimization) \cite{franceschi2018bilevel, gao2022value} and control (e.g., trajectory and gait optimization) \cite{landry2019bilevel, sun2021fast, olkin2024bilevel} applications involving hierarchical decision-making, designing reliable and efficient algorithms for solving such problems remains a significant challenge.

\subsection{Related Work}

A popular approach for solving \eqref{eq:BLO} is \emph{hypergradient descent}. Assuming the lower-level problem has a unique solution, we can define the implicit objective \( \ell(x) = f(x, y^\star(x)) \), where \( y^\star(x) = \argmin_{y \in \mathbb{R}^m} \; g(x,y) \). This implicit function can be optimized via gradient descent by differentiating through the solution of the lower-level problem. However, computing \( \nabla \ell(x) \) exactly requires solving for \( y^\star(x) \) and inverting \( \nabla_{yy}^2 g(x,y^\star(x)) \), which are computationally expensive. To mitigate this, many methods estimate \( \nabla \ell(x) \), such as approximate implicit differentiation (AID) \cite{pedregosa2016hyperparameter, domke2012generic, ghadimi2018approximation, ji2021bilevel} and iterative differentiation (ITD) \cite{shaban2019truncated, franceschi2018bilevel, grazzi2020iteration}.
While hypergradient descent has been widely used in applications such as hyperparameter tuning and meta-learning, its effectiveness often hinges on the tractability of the lower-level problem and the stability of the resulting gradient estimates. 
More recently, \cite{grontas2024big} approximated the hypergradient in a distributed manner to improve scalability.

Another class of bilevel solvers reformulates \eqref{eq:BLO} as a single-level constrained optimization problem:
\begin{align}\label{eq:BLO-reformulated-1}
    &\min_{x,\, y} \; f(x,y) \quad  \text{s.t.} \quad  h(x,y)= 0.
\end{align}
Such problems are typically solved using primal-dual or penalty-based methods, e.g., \cite{jiang2024primal, sow2022primal, shen2023penalty, mehra2021penalty, liu2022bome, kwon2023fully}. Two popular reformulations fall under this framework: value function-based approaches, where \( h(x,y) = g(x,y) - \min_y g(x,y) \), and stationary-seeking approaches, where \( h(x,y) = \nabla_y g(x,y) \). While the value function formulation is always equivalent to \eqref{eq:BLO}, it often suffers from non-differentiability and computational burden in evaluating \( \min_y g(x,y) \). In contrast, the stationary-seeking formulation is equivalent to \eqref{eq:BLO} under some assumptions, e.g., when the lower-level objective \( g \) is convex or satisfies the weak Polyak-Łojasiewicz (PL) condition \cite{csiba2017global}. Our proposed framework also falls into this second category.



Building on the recent application of control barrier functions~\cite{ames2016control} to optimization algorithm design (e.g., \cite{schechtman2023orthogonal,allibhoy2023control}), {the work in} \cite{sharifi2025safe} developed a continuous-time framework to solve a relaxed single-level reformulation of \eqref{eq:BLO-reformulated-1}, given by
\begin{align}\label{eq:bilevel-approx-1}
    f^\star_{\varepsilon} \leftarrow  
    \min_{x,y}~f(x,y) \quad 
    \text{s.t.} \quad  h(x,y) - \varepsilon^2 \leq 0,
\end{align}
where \( h(x,y) = \|\nabla_y g(x,y)\|^2 \) from now {on\footnote{Throughout, we denote the Euclidean norm by $\|\cdot\|$.}}, \( \varepsilon > 0 \) is a user-defined tolerance parameter, and {$f^\star_{\varepsilon}\leq f^\star$}. Treating the gradient flow of the upper-level objective as the nominal (``unsafe'') dynamics, \cite{sharifi2025safe} proposed a safety filter by employing \( h \) as a barrier function. The resulting dynamics is obtained by solving the following convex quadratic program (QP):
\begin{alignat}{2} \label{eq: first-order lower-level dynamics 2}
    &\min_{(\dot{x}_d,\dot{y}_d)}~  \frac{1}{2}\|\dot{x}_d+\nabla_x f(x,y)\|^2  + \frac{1}{2}\|\dot{y}_d+\nabla_y f(x,y)\|^2\\
    &\text{s.t.}\ \nabla_x h(x,y)^\top \dot{x}_d \! + \! \nabla_y h(x,y)^\top \dot{y}_d \! + \! \alpha_{b} (h(x,y) \! - \! \varepsilon^2) \leq 0, \notag    
\end{alignat}
where $\alpha_b > 0$ is a barrier parameter. This QP computes the minimal perturbation to the nominal gradient flow required to ensure that {the feasible set defined by the $\varepsilon^2$-sublevel set of \( h \), denoted by \( L_{\varepsilon^2}^{-}(h) \), is forward invariant}. The solution to this QP yields the following continuous-time dynamics:
\begin{align}\label{eq:projectionXY}
    \dot{x} &= -\nabla_x f(x,y) - \lambda(x,y) \nabla_x h(x,y), \notag \\
    \dot{y} &= -\nabla_y f(x,y) - \lambda(x,y) \nabla_y h(x,y), \\
    \lambda(x,y) &= 
    \frac{\left[-\nabla_x h^\top \nabla_x f - \nabla_y h^\top \nabla_y f + \alpha_{b} (h - \varepsilon^2)\right]_+}
         {\|\nabla_x h\|^2 + \|\nabla_y h\|^2}, \notag
\end{align}
where {we dropped the dependence of $f$ and $h$ on $(x,y)$ for brevity,} \([\,\cdot\,]_+\) denotes projection onto the non-negative reals, and $\lambda$ is the dual variable associated with \eqref{eq: first-order lower-level dynamics 2}. This construction ensures that the trajectories are always feasible, implying that the lower-level solution remains within a controlled suboptimality threshold dictated by \( \varepsilon \). The introduction of this relaxation is motivated by the singularity of the dynamics \eqref{eq:projectionXY} in the limit as \( \varepsilon \to 0 \)~\cite{sharifi2025safe}.

\subsection{Contributions}
To address the limitations of continuous-time solvers in high-dimensional settings, we develop a discrete-time iterative counterpart of the ODE in \eqref{eq:projectionXY}. At each iteration \( k = 0,1,\ldots \), the variables are updated as
\begin{align}
    x^{k+1} &= x^{k} + t^k \Delta x^k, \quad
    y^{k+1} = y^{k} + t^k \Delta y^k, \notag
\end{align}
where \( (\Delta x^k, \Delta y^k) \) are search directions, and \( t^k > 0 \) is the step size. Our framework consists of two key stages: first, computing \( (\Delta x^k, \Delta y^k) \) by solving a convex QCQP, derived as a discrete analog of \eqref{eq: first-order lower-level dynamics 2}; second, performing a novel line search to adaptively select \( t^k \) such that the method ensures both \emph{anytime feasibility}, i.e., \( h(x^k,y^k) \leq \varepsilon^2 \), and \emph{descent}, i.e., \( f(x^{k+1},y^{k+1}) < f(x^k,y^k) \). The QCQP, which we show admits a closed-form solution, is constructed to guarantee that the line search yields a uniformly positive step size, ensuring continued progress toward a stationary point of the relaxed problem \eqref{eq:bilevel-approx-1}. \blue{We establish that the sequence \( (\Delta x^k, \Delta y^k) \) converges to a first-order stationary point of \eqref{eq:bilevel-approx-1} at an \( \mathcal{O}(1/k) \) \emph{ergodic convergence rate} under \emph{local} regularity assumptions.} Overall, our method results in a \emph{tuning-free sequential QCQP} approach for solving \eqref{eq:bilevel-approx-1}, with provable convergence. Finally, we empirically evaluate the performance of the method using numerical experiments.
%

%

%

\section{Main Results}
To streamline notation, we define the concatenated vector \( z = (x,y) \in \mathbb{R}^{n+m} \). We impose the following assumptions.

\begin{assumption}[Upper-level Smoothness]\label{assumption:UL}
$f$ is continuously differentiable; hence $\nabla f$ is locally Lipschitz continuous.
\end{assumption}

\begin{assumption}[Lower-level Smoothness]\label{assumption:LL}
$g$ is twice continuously differentiable; hence $\nabla^2_{yx} g, \nabla^2_{yy} g$ are locally Lipschitz continuous.
\end{assumption}

\begin{assumption}[Lower-level Regularity]\label{assumption:LL-2}
For all $z$ such that $\nabla_y g(z) \neq 0$,
$
    \nabla_y g(z) \notin \mathcal{N}\big( [\nabla^2_{yx} g(z) \  \nabla^2_{yy} g(z)]^\top\big), \notag
$
where $\mathcal{N}(\blue{\cdot})$ denotes the null space. 
\end{assumption}
From the definition \( h(z) = \|\nabla_y g(z)\|_2^2 \), one can verify that under Assumption~\(\ref{assumption:LL-2}\),
$
\nabla h(z) = 0 \iff h(z) = 0.
$


\begin{assumption}[Coercivity]\label{assum:coercive}
$h$ is coercive, i.e., $h(z)=\|\nabla_y g(z)\|^2\to \infty$ as $\|z\|\to \infty$.
\end{assumption}
Coercivity implies that $L_{\varepsilon^2}^{-}(h)$ is bounded for any $\varepsilon \geq 0$. Together with Assumption \ref{assumption:LL}, they imply $\nabla h$ is locally Lipschitz continuous.


\begin{remark}\label{rm:assumption}
Assumption \ref{assumption:LL-2} holds under any of the following:
\begin{enumerate}[leftmargin=*]
    \item The function $y \mapsto g(x,\cdot)$ is strongly convex for every $x$.
    
    \item The matrix $[\nabla^2_{yx} g(\cdot) \  \nabla^2_{yy} g(\cdot)]^\top$ has full rank.
    \item The function $h(\cdot)$ satisfies the PL condition, i.e., 
    \begin{align}
    h(z) - \inf_{z} h(z) =
    h(z) \leq \frac{1}{2\mu} \|\nabla h(z)\|^2 \text{ for } \mu >0. \notag
    \end{align}
\end{enumerate}
\end{remark}
All conditions in \Cref{rm:assumption}, ensure that if $h(z) \neq 0$, then
$\nabla h(z) = [\nabla^2_{yx} g(z) \ \nabla^2_{yy} g(z)]^\top \nabla_yg(z) \neq 0.$ 

\begin{figure}[t]
    \centering
    \includegraphics[width=0.75\linewidth]{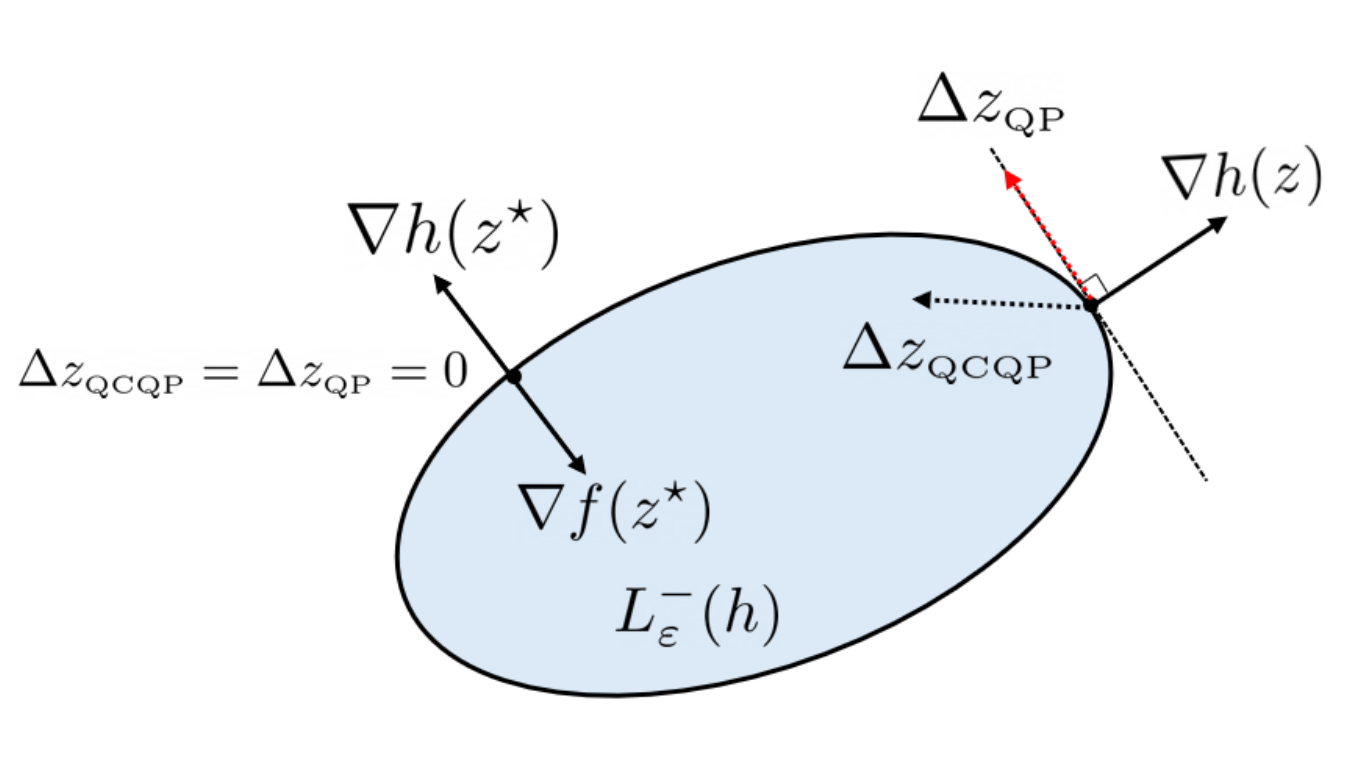}
    \caption{When $h(z) = \varepsilon^2$, the QP may produce an unsafe direction, while the QCQP ensures safety by enforcing $\nabla h(z)^\top \Delta z < 0$.
    }
    \label{fig:example}
\end{figure}
\subsection{Caveat of Search 
Direction Selection via QP}
Suppose the current iterate is feasible, i.e., $z \in L_{\varepsilon^2}^{-}(h)$. A direct discretization of the QP in \eqref{eq: first-order lower-level dynamics 2} leads to the following problem:
\begin{alignat}{2} \label{eq:qp}\tag{QP}
    &\min_{\Delta z}~ \frac{1}{2}\|\Delta z+\nabla f(z)\|^2 \\
    &\text{s.t.} \quad \nabla h(z)^\top \Delta z + \alpha_{b} (h(z) - \varepsilon^2) \leq 0, \notag    
\end{alignat}
where \( \Delta z \) serves as the discrete analog of \( \dot{z} \). However, this approach introduces a fundamental limitation: when the current iterate $z$ approaches the boundary, i.e., as \( h(z) \to \varepsilon^2 \), the admissible step size \( t \) tends to zero. In the limiting case \( h(z) = \varepsilon^2 \), the solution \( \Delta z \) to \eqref{eq:qp} could become \textit{tangent} to the boundary when $\nabla f(z)^\top \nabla h(z) \leq 0$—see \Cref{fig:example} for an illustration. In this case, there exists no strictly positive step size \( t \) such that \( z + t \Delta z \) remains within the feasible set, causing the updates to stagnate. This challenge reflects a broader difficulty in discretizing control barrier functions, which has recently been investigated in \cite{brunke2024practical}. In the following, we propose a step size aware variation of \eqref{eq:qp} to mitigate the issue of vanishingly small step size.

\subsection{Search Direction Selection via QCQP}
Assuming the current iterate is feasible, $z \in L_{\varepsilon^2}^{-}(h)$, we define the following QCQP to select the search direction:
\begin{align} \label{eq:qcqp}\tag{QCQP}
    \min_{\Delta z} \quad & \frac{1}{2} \|\Delta z + \nabla f(z)\|^2 \\
    \text{s.t.} \quad & \nabla h(z)^\top \Delta z + \alpha_{b} (h(z) - \varepsilon^2) \leq - w\|\Delta z\|^2. \notag
\end{align}
where \( w > 0 \) is user-defined tilting parameter. The additional quadratic term \( w\|\Delta z\|^2 \) biases the search direction toward the interior of the feasible set $L_{\varepsilon^2}^{-}(h)$. In particular, on the boundary \( h(z) = \varepsilon^2 \), this term ensures that the QCQP solution satisfies \( \nabla h(z)^\top \Delta z < 0 \) (as long as $\Delta z \neq 0$), i.e., the search direction is strictly inward-pointing—see \Cref{fig:example} for an illustration. This tilting mechanism guarantees that the iterates remain feasible and enables the use of a uniformly positive step size.

Notably, the proposed QCQP admits a closed-form solution provided in Appendix~\ref{sec:appendix}.  The following theorem summarizes its key properties.

\begin{theorem}\label{thm:QCQP}
    Under Assumptions \ref{assumption:UL},\ref{assumption:LL}, and \ref{assumption:LL-2}, the following statements are true for \eqref{eq:qcqp}:
    \begin{enumerate}[label=(\roman*), ref=\thetheorem-\roman*]
        \item \label{thm:qcqp-feas} If $z \in L^{-}_{\varepsilon^2}(h)$, then  \eqref{eq:qcqp} is strictly feasible.

        \item \label{thm:qcqp-descent} 
        If $z \in L^{-}_{\varepsilon^2}(h)$, then $\Delta z$ is a descent direction for $f$,
        \begin{align}\label{eq:descent_direction}
            \nabla f(z)^\top \Delta z \leq - \|\Delta z\|^2.
        \end{align}

        \item If $z \notin L^{-}_{\varepsilon^2}(h)$, and $\Delta z$ exists, then it is a descent direction for $h$,
        \begin{align} \label{eq:descent_direction_for_h}
            \nabla h(z)^\top \Delta z \leq -\alpha_{b} (h(z)-\varepsilon^2) - w\|\Delta z\|^2 \leq 0.
        \end{align}
        \item \label{thm:qcqp-kkt} 
        \eqref{eq:qcqp} returns $\Delta z =0$ when $z$ satisfies the Karush-Kuhn-Tucker (KKT) \cite{nesterov2018lectures} conditions for \eqref{eq:bilevel-approx-1}.
    \end{enumerate}
\end{theorem}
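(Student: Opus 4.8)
The plan is to treat \eqref{eq:qcqp} as a convex program: the objective is strictly convex in $\Delta z$ and the constraint map $w\|\Delta z\|^2 + \nabla h(z)^\top \Delta z + \alpha_b(h(z)-\varepsilon^2)$ is convex, so its Karush--Kuhn--Tucker (KKT) system is sufficient for optimality always, and necessary once Slater's condition holds. Introducing the Lagrange multiplier $\lambda \geq 0$, stationarity reads $(\Delta z + \nabla f(z)) + \lambda(2w\Delta z + \nabla h(z)) = 0$, i.e.
\begin{align}\label{eq:kkt-stat-plan}
    \nabla f(z) = -(1+2\lambda w)\,\Delta z - \lambda\,\nabla h(z),
\end{align}
supplemented by complementary slackness and primal/dual feasibility. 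I would prove the four parts in the order (i), (ii)/(iv), (iii), because part (i) certifies Slater's condition and thereby licenses the use of the KKT identity \eqref{eq:kkt-stat-plan} in the remaining arguments.

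For (i), if $h(z) < \varepsilon^2$ then $\Delta z = 0$ satisfies the constraint strictly. On the boundary $h(z) = \varepsilon^2$ the point $\Delta z = 0$ is only marginally feasible, so I would instead test $\Delta z = -t\,\nabla h(z)$ for small $t>0$: here $\varepsilon>0$ forces $h(z)=\varepsilon^2>0$, and Assumption~\ref{assumption:LL-2} via \Cref{rm:assumption} guarantees $\nabla h(z)\neq 0$, so the constraint value equals $t\|\nabla h(z)\|^2(wt-1)$, which is strictly negative for $t<1/w$. This furnishes a strictly feasible point.

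Part (ii) is the technical core. Taking the inner product of \eqref{eq:kkt-stat-plan} with $\Delta z$ gives $\nabla f(z)^\top \Delta z = -(1+2\lambda w)\|\Delta z\|^2 - \lambda\,\nabla h(z)^\top \Delta z$. If $\lambda = 0$ this is exactly $-\|\Delta z\|^2$. If $\lambda>0$, complementary slackness makes the constraint active, so $\nabla h(z)^\top \Delta z = -w\|\Delta z\|^2 - \alpha_b(h(z)-\varepsilon^2)$; substituting and simplifying yields $\nabla f(z)^\top \Delta z = -\|\Delta z\|^2 - \lambda w\|\Delta z\|^2 + \lambda\alpha_b(h(z)-\varepsilon^2) \leq -\|\Delta z\|^2$, where the final bound uses $\lambda\geq 0$, $w,\alpha_b>0$, and $h(z)\leq\varepsilon^2$. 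Part (iii) is then immediate: the first inequality is merely a rearrangement of primal feasibility of \eqref{eq:qcqp}, and the trailing $\leq 0$ follows from $h(z)>\varepsilon^2$ together with $w\|\Delta z\|^2\geq 0$. For (iv), I would verify that a KKT triple of \eqref{eq:bilevel-approx-1}, namely $\mu\geq 0$ with $\nabla f(z)+\mu\nabla h(z)=0$ and $\mu(h(z)-\varepsilon^2)=0$, certifies $\Delta z=0$ as a KKT point of \eqref{eq:qcqp} with the same multiplier $\lambda=\mu$: stationarity \eqref{eq:kkt-stat-plan} collapses to $\nabla f(z)+\mu\nabla h(z)=0$, complementary slackness reduces to $\mu\,\alpha_b(h(z)-\varepsilon^2)=0$, and primal/dual feasibility hold since $h(z)\leq\varepsilon^2$ and $\mu\geq 0$. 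Sufficiency of KKT for the convex \eqref{eq:qcqp}, combined with strict convexity of its objective (hence a unique minimizer), forces $\Delta z=0$.

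The main obstacle is the case analysis in (ii): a naive optimality comparison against the feasible point $\Delta z=0$ only yields the weaker bound $-\tfrac12\|\Delta z\|^2$, so attaining the sharp factor $-\|\Delta z\|^2$ genuinely requires the stationarity identity \eqref{eq:kkt-stat-plan} and the activity of the constraint when $\lambda>0$. The delicate point is keeping track of the sign of the $\lambda\alpha_b(h(z)-\varepsilon^2)$ term, which is nonpositive precisely because feasibility places $z$ in $L^{-}_{\varepsilon^2}(h)$.
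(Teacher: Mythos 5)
Your proposal is correct and follows essentially the same route as the paper: establish strict feasibility (Slater) to license the KKT system of \eqref{eq:qcqp}, take the inner product of the stationarity condition with $\Delta z$ and use complementary slackness plus $h(z)\leq\varepsilon^2$ to get the sharp bound in (ii), read off (iii) from primal feasibility, and observe that at $\Delta z=0$ the KKT system of \eqref{eq:qcqp} coincides with that of \eqref{eq:bilevel-approx-1}. The only cosmetic difference is your choice of strictly feasible point $-t\nabla h(z)$ for small $t>0$ versus the paper's exact minimizer $-\nabla h(z)/(2w)$ of the constraint function; both hinge on the same fact that Assumption \ref{assumption:LL-2} forces $\nabla h(z)\neq 0$ on the boundary.
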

\begin{proof}
    See Appendix \ref{sec:appendix} for the proof.
\end{proof}


\subsection{Step Size Selection: Line Search}
After selecting the update direction \( \Delta z \) via \eqref{eq:qcqp}, we propose a line search procedure that ensures both \emph{convergence} and \emph{anytime feasibility}. The latter is crucial for the former, as maintaining feasibility guarantees that the search direction remains a descent direction for the upper-level objective \( f \) {(See \eqref{eq:descent_direction})}.
This, in turn, allows the use of a backtracking line search to reduce the upper-level objective value at each step.

\medskip
\noindent \textbf{Armijo line search on the upper-level objective.} To ensure convergence, we employ the classical Armijo condition \cite{nesterov2018lectures} combined with a backtracking line search with parameter $\beta \in (0,1)$:
\begin{align}\label{eq:armijo}
    f(z + t \Delta z) \leq f(z) + \alpha_{\mathrm{ls}} t \nabla f(z)^\top \Delta z,
\end{align}
where \( \alpha_{\mathrm{ls}} \in (0,1) \) is a line search parameter. Provided that \( \nabla f(z) ^ \top \Delta z < 0 \) and \( \nabla f \) is locally Lipschitz continuous, we will establish that a uniformly positive step size \( t > 0 \) satisfying \eqref{eq:armijo} always exists (see \Cref{lem:step size}).

\medskip
\noindent \textbf{Safeguarding the step size for anytime feasibility.} The Armijo condition assumes that the iterates remain feasible. To guarantee this, we introduce a safety condition by leveraging discrete-time control barrier functions \cite{agrawal2017discrete, brunke2024practical, cosner2023robust}:
\begin{align}\label{eq:safety}
    h(z + t \Delta z) - \varepsilon^2 \leq (1 - \gamma)\big(h(z) - \varepsilon^2\big),
\end{align}
where \( \gamma \in (0,1) \) is a user-defined parameter that controls the maximum speed of approaching the boundary. This inequality ensures recursive feasibility; specifically, if \( z \in L_{\varepsilon^2}^{-}(h)\), then \( z + t \Delta z \in L_{\varepsilon^2}^{-}(h) \) as well, {ensuring the strict feasibility of the QCQP.} We will prove that, under local Lipschitz continuity of \( \nabla h \), this safeguard yields a uniformly positive step size across iterations (see \Cref{lem:step size}).

Notably, condition \eqref{eq:safety} can be interpreted as a discrete-time analog of the continuous-time barrier condition:
\[
\nabla h(z)^\top \dot{z} + \alpha_{b}(h(z) - \varepsilon^2) \leq 0.
\]
Introducing the ansatz \( \dot{z} \approx \Delta z / \Delta s \), where \( \Delta s \) denotes the discretization time step, and identifying \( \Delta s = t \), we observe that condition \eqref{eq:safety} coincides with the discretized form of the barrier condition when \( \gamma = \alpha_b t \). This correspondence offers further justification for the proposed safeguard, as it faithfully captures the behavior of the continuous-time system in discrete updates. A detailed summary of the complete method is provided in \Cref{alg:method}.
\begin{algorithm}[t]
\caption{Sequential QCQP for BLO with Line Search}\label{alg:method}
\begin{algorithmic}[1]
    \REQUIRE Initialization $z^0 \in \mathbb{R}^{n+m}$, maximum iteration $K > 0$, backtracking parameter $\beta \in (0, 1)$, tolerance parameter $\varepsilon > 0$, maximum step size $t_{\max} > 0$
    \STATE Ensure $h(z^0) \leq \varepsilon^2$ and initialize $z \leftarrow z^0$
    \FOR{$k = 1$ to $K$}
        \STATE Compute $\Delta z^k$ according to \eqref{eq:qcqp}. 
        \STATE $t \gets t_{\max}$
        \STATE \textbf{while} conditions~\eqref{eq:armijo} and~\eqref{eq:safety} are not satisfied: \( t \gets \beta t \)

        \STATE $t^k\gets t$
        \STATE Update $z^{k+1} \gets z^k + t^k \Delta z^k$
    \ENDFOR
    \RETURN $z^K$
\end{algorithmic}
\end{algorithm}

\subsection{Convergence Analysis}
We first prove that the chosen step size $\{t^k\}$ is uniformly bounded away from zero, {if $z^0 \in L_{\varepsilon^2}^{-}(h)$}.
\begin{lemma}[Lower Bound on Step Size]\label{lem:step size}
    The step size $t^k$ chosen by the proposed line search {(\eqref{eq:armijo} and \eqref{eq:safety})} satisfies
    \[
    t^k \geq t_{\min} = \min\{ \beta \frac{2 (1- \alpha_{\mathrm{ls}})}{L_f}, 
     \beta \frac{ \gamma}{\alpha_{b}},
     \beta \frac{2 w}{L_h}, t_{\max} \}, \forall k\geq 1.
    \]
\end{lemma}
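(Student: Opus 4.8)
The plan is to exhibit a fixed threshold $\bar t>0$, depending only on the problem constants, such that \emph{every} $t\in(0,\bar t]$ simultaneously satisfies the Armijo condition \eqref{eq:armijo} and the safety condition \eqref{eq:safety}; the stated bound then follows from the geometry of backtracking. Since the tested step sizes form the decreasing sequence $t_{\max},\beta t_{\max},\beta^2 t_{\max},\dots$ and the line search accepts the first one meeting both conditions, the accepted value $t^k$ is either $t_{\max}$ (if it already works), or it equals $\beta$ times the last rejected step, which is itself larger than $\bar t$ (otherwise it would have been accepted); in either case $t^k\geq\min\{t_{\max},\beta\bar t\}$. It therefore suffices to compute $\bar t$.

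Before that, I first pin down $L_f$ and $L_h$ uniformly in $k$, where the feasibility and coercivity structure is essential. The safety condition \eqref{eq:safety} ensures recursive feasibility, since $h(z)-\varepsilon^2\leq 0$ and $1-\gamma>0$ give $h(z+t\Delta z)-\varepsilon^2\leq 0$; hence, by induction from $z^0\in L_{\varepsilon^2}^{-}(h)$, every iterate lies in the \emph{compact} set $L_{\varepsilon^2}^{-}(h)$, which is bounded by Assumption~\ref{assum:coercive}. On this set $\|\nabla f\|$ is bounded, and since $\Delta z=0$ is feasible for \eqref{eq:qcqp}, optimality gives $\|\Delta z^k+\nabla f(z^k)\|\leq\|\nabla f(z^k)\|$, yielding the uniform bound $\|\Delta z^k\|\leq 2\sup_{L_{\varepsilon^2}^{-}(h)}\|\nabla f\|$. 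Consequently every backtracking test point $z^k+t\Delta z^k$ with $t\leq t_{\max}$ stays in a fixed enlarged compact set $\mathcal{C}$, and I take $L_f,L_h$ to be the Lipschitz moduli of $\nabla f,\nabla h$ on $\mathcal{C}$, finite by Assumptions~\ref{assumption:UL}, \ref{assumption:LL}, and \ref{assum:coercive}.

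For the Armijo threshold, the descent lemma on $\mathcal{C}$ gives $f(z+t\Delta z)\leq f(z)+t\nabla f(z)^\top\Delta z+\tfrac{L_f}{2}t^2\|\Delta z\|^2$, so \eqref{eq:armijo} holds as soon as $(1-\alpha_{\mathrm{ls}})\,t\,\nabla f(z)^\top\Delta z+\tfrac{L_f}{2}t^2\|\Delta z\|^2\leq 0$. Invoking the descent property \eqref{eq:descent_direction}, i.e. $\nabla f(z)^\top\Delta z\leq-\|\Delta z\|^2$, the left-hand side is at most $t\|\Delta z\|^2\big(\tfrac{L_f}{2}t-(1-\alpha_{\mathrm{ls}})\big)$, which is nonpositive whenever $t\leq 2(1-\alpha_{\mathrm{ls}})/L_f$. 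For the safety threshold, the descent lemma for $h$ gives $h(z+t\Delta z)-\varepsilon^2\leq h(z)-\varepsilon^2+t\nabla h(z)^\top\Delta z+\tfrac{L_h}{2}t^2\|\Delta z\|^2$, so \eqref{eq:safety} follows once $t\nabla h(z)^\top\Delta z+\tfrac{L_h}{2}t^2\|\Delta z\|^2+\gamma(h(z)-\varepsilon^2)\leq 0$. Substituting the \eqref{eq:qcqp} constraint $\nabla h(z)^\top\Delta z\leq-\alpha_b(h(z)-\varepsilon^2)-w\|\Delta z\|^2$ and regrouping, this quantity is at most $(\gamma-\alpha_b t)(h(z)-\varepsilon^2)+t\|\Delta z\|^2(\tfrac{L_h}{2}t-w)$. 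Since $h(z)-\varepsilon^2\leq 0$ by feasibility, the first term is nonpositive when $t\leq\gamma/\alpha_b$ and the second when $t\leq 2w/L_h$.

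Taking $\bar t=\min\{2(1-\alpha_{\mathrm{ls}})/L_f,\ \gamma/\alpha_b,\ 2w/L_h\}$ makes both conditions hold for all $t\in(0,\bar t]$, and combining with the backtracking bound above gives $t^k\geq\min\{t_{\max},\beta\bar t\}=t_{\min}$, as claimed. I expect the main obstacle to be the uniform-domain argument: guaranteeing that a single pair $(L_f,L_h)$ controls the two descent lemmas across \emph{all} iterations and \emph{all} intermediate backtracking trials. This is precisely where anytime feasibility (to keep iterates in $L_{\varepsilon^2}^{-}(h)$), coercivity (to make that set compact), and the uniform bound on $\|\Delta z^k\|$ (to confine the test points to $\mathcal{C}$) must be combined; without this, the thresholds would be iteration-dependent and the uniform lower bound could fail.
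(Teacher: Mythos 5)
Your proof is correct and follows essentially the same route as the paper's: uniform Lipschitz constants obtained from anytime feasibility plus coercivity, the descent lemma applied to $f$ and $h$ combined with the QCQP descent inequalities \eqref{eq:descent_direction} and \eqref{eq:descent_direction_for_h} to get the thresholds $2(1-\alpha_{\mathrm{ls}})/L_f$, $\gamma/\alpha_b$, $2w/L_h$, and the backtracking geometry to pick up the factor $\beta$. Your explicit uniform bound $\|\Delta z^k\|\leq 2\sup_{L_{\varepsilon^2}^{-}(h)}\|\nabla f\|$ (from feasibility of $\Delta z=0$ in \eqref{eq:qcqp}), used to confine all trial points to one fixed compact set on which $L_f,L_h$ are defined, is a careful detail the paper leaves implicit.
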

\begin{proof}
Define $ \phi(t) = h(z + t \Delta z) - \varepsilon^2 - (1 - \gamma)(h(z) - \varepsilon^2) $, and assume $ \Delta z \neq 0 $. 
If $ \phi(0) = 0 $, then $ h(z) = \varepsilon^2 $, and by Theorem \ref{thm:QCQP}, we have $ \phi'(0) = \nabla h(z)^\top \Delta z < 0 $. By continuity of $h(z)$, there exists $ t > 0 $ such that $ \phi(t) < 0 $, hence $ \phi(t) \leq 0 $. If
$ \phi(0) < 0 $, continuity again implies the existence of $ t > 0 $ such that $ \phi(t) \leq 0 $.

With this argument, there exists a sequence of $t^k > 0$ that satisfies \eqref{eq:safety}, and condition \eqref{eq:safety} 
ensures that $h(z^{k+1})< \varepsilon^2$ (if $h(z^0) < \varepsilon^2$).
This implies that $\{z^k\}_k$ remains bounded due to the coercivity of $h$. Boundedness of $\{z^k\}_k$ implies that for any $k\geq 0$, $\nabla f(z^k)$ and $\nabla h(z^k)$ are Lipschitz with uniform constants $L_f$ and $L_h$, respectively.
%
Using the Descent lemma \cite{nesterov2018lectures} with the Lipschitz continuity of $\nabla f$, a sufficient condition for the Armijo condition \eqref{eq:armijo} to hold is
\begin{align*}  
    &f(z^k + t \Delta z^k) \leq f(z^k) + t^k \nabla f(z^k)^\top \Delta z^k \! + \! \tfrac{L_f(t^k)^2}{2} \|\Delta z^k\|^2 \\
    &\leq f(z^k) + t^k \nabla f(z^k)^\top \Delta z^k + \tfrac{L_f(t^k)^2}{2} (-\nabla f(z^k)^\top\Delta z^k) \notag \\
    &= f(z^k) + t^k (1-\tfrac{L_ft^k}{2}) \nabla f(z^k)^\top \Delta z^k \notag \\
    &\leq f(z^k) + \alpha_{\mathrm{ls}} t^k \nabla f(z^k)^\top\Delta z^k,
\end{align*}
where in the second inequality we have used \eqref{eq:descent_direction}, and the last inequality is satisfied for any $t^k \in [0, \frac{2(1-\alpha_{\mathrm{ls}})}{L_f}]$. 

Now, using the Descent lemma with the Lipschitz continuity of $\nabla h$, a sufficient condition to satisfy \eqref{eq:safety} is
\begin{align*}
    &(h(z^k + t^k \Delta z^k) \!-\! \varepsilon^2) - (h(z^k) \!-\! \varepsilon^2) \\
    &\leq t^k\nabla h(z^k)^\top\Delta z^k +  \tfrac{L_h (t^k)^2}{2} \|\Delta z^k\|^2 
    \\
    &\leq t^k (-\alpha_{b} (h(z^k)\!-\!\varepsilon^2)\!-\! w\|\Delta z^k\|^2) \!+\! \tfrac{L_h (t^k)^2}{2} \|\Delta z^k\|^2 \notag\\
    &= -\alpha_b t^k(h(z^k)-\varepsilon^2) + \big( \tfrac{L_h (t^k)^2}{2} - wt^k \big) \|\Delta z^k\|^2 \\
    &\leq -\gamma (h(z^k)-\varepsilon^2),
\end{align*}
where we have used \eqref{eq:descent_direction_for_h} in the second inequality. The last inequality holds when $t^k \leq \min(\frac{2w}{L_h},\frac{\gamma}{\alpha_{b}})$. Putting these together, a lower bound for the chosen step size is the minimum of $\beta \min(\frac{2(1-\alpha_{\mathrm{ls}})}{L_f},\frac{2w}{L_h},\frac{\gamma}{\alpha_{b}})$ and $t_{\max}$, which concludes the proof.
\end{proof}

\begin{remark}
    Lemma \ref{lem:step size} highlights the necessity of introducing the tilting parameter \( w > 0 \) in the search direction optimization problem, as it ensures a strictly positive lower bound on the step size and thereby prevents the algorithm from halting.    
\end{remark}
%
\blue{Next, we prove the anytime feasibility of Algorithm \ref{alg:method} for problem \eqref{eq:bilevel-approx-1}, as well as ergodic convergence of the sequence $\{\|\Delta z^k\|^2\}_k$.}

\begin{theorem}\label{thm:convergence}
    Assume that $z^0 \in L_{\varepsilon^2}^{-}(h)$.
    Consider the iterative algorithm outlined in \Cref{alg:method}. Then for all $K=0,1,\ldots$,
    $$
    (h(z^K)-\varepsilon^2) \leq (1-\gamma)^K (h(z^0)-\varepsilon^2) \leq 0.
    $$
    \blue{Furthermore, for all $K=1,2,\cdots$, we have
    $$
    \frac{1}{K}\sum_{k=0}^{K-1} \|\Delta z^k\|^2 \leq \frac{f(z^0) - f^\star_\varepsilon}{\alpha_{\mathrm{ls}}t_{\min}K}.
    $$
    In particular, $\lim_{k \to \infty} \|\Delta z^k\|=0$.}
\end{theorem}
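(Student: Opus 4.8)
The plan is to prove the two claims of Theorem~\ref{thm:convergence} separately, since the feasibility bound is really a prerequisite for the convergence bound. For the first claim, the safety condition~\eqref{eq:safety} directly gives $(h(z^{k+1}) - \varepsilon^2) \leq (1-\gamma)(h(z^k) - \varepsilon^2)$ at every iteration, because the line search only accepts a step size $t^k$ that satisfies~\eqref{eq:safety}. Unrolling this one-step contraction from $k=0$ to $k=K-1$ yields $(h(z^K) - \varepsilon^2) \leq (1-\gamma)^K (h(z^0) - \varepsilon^2)$. Since $z^0 \in L^{-}_{\varepsilon^2}(h)$ means $h(z^0) - \varepsilon^2 \leq 0$ and $1-\gamma \in (0,1)$, the right-hand side is nonpositive, giving the chain of inequalities. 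This is the easy part.

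For the ergodic convergence bound, I would chain together the descent property from Theorem~\ref{thm:QCQP} with the Armijo condition and the step-size lower bound from Lemma~\ref{lem:step size}. The key observation is that anytime feasibility (just established) guarantees $z^k \in L^{-}_{\varepsilon^2}(h)$ for every $k$, so by Theorem~\ref{thm:qcqp-descent} we have $\nabla f(z^k)^\top \Delta z^k \leq -\|\Delta z^k\|^2$ at each iteration. Plugging this into the accepted Armijo condition~\eqref{eq:armijo} gives
\begin{align*}
    f(z^{k+1}) \leq f(z^k) + \alpha_{\mathrm{ls}} t^k \nabla f(z^k)^\top \Delta z^k \leq f(z^k) - \alpha_{\mathrm{ls}} t^k \|\Delta z^k\|^2.
\end{align*}
Using $t^k \geq t_{\min} > 0$ from Lemma~\ref{lem:step size}, this becomes $f(z^{k+1}) \leq f(z^k) - \alpha_{\mathrm{ls}} t_{\min} \|\Delta z^k\|^2$, i.e.\ a uniform per-step decrease proportional to $\|\Delta z^k\|^2$.

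The final step is a standard telescoping argument: summing the inequality $\alpha_{\mathrm{ls}} t_{\min} \|\Delta z^k\|^2 \leq f(z^k) - f(z^{k+1})$ from $k=0$ to $K-1$ collapses the right-hand side to $f(z^0) - f(z^K)$. Since every iterate is feasible for~\eqref{eq:bilevel-approx-1}, we have $f(z^K) \geq f^\star_\varepsilon$, so $f(z^0) - f(z^K) \leq f(z^0) - f^\star_\varepsilon$. Dividing by $\alpha_{\mathrm{ls}} t_{\min} K$ yields the claimed $\mathcal{O}(1/K)$ ergodic bound $\tfrac{1}{K}\sum_{k=0}^{K-1}\|\Delta z^k\|^2 \leq \tfrac{f(z^0) - f^\star_\varepsilon}{\alpha_{\mathrm{ls}} t_{\min} K}$. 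The limit statement $\lim_{k\to\infty}\|\Delta z^k\| = 0$ follows because the partial sums $\sum_k \|\Delta z^k\|^2$ are bounded (the telescoped decrease is finite, as $f$ is bounded below by $f^\star_\varepsilon$ on the feasible set), so the summable nonnegative terms must go to zero.

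I expect the main obstacle to be the subtle points rather than the algebra: ensuring that $f^\star_\varepsilon$ is genuinely a valid lower bound on $f(z^K)$ requires that each iterate stays feasible (which the first claim provides), and ensuring $t_{\min}$ is a legitimate uniform lower bound requires the boundedness of $\{z^k\}$ established inside Lemma~\ref{lem:step size} via coercivity of $h$. One should also verify the edge case where $\|\Delta z^k\| = 0$ at some iteration, in which case Theorem~\ref{thm:qcqp-kkt} indicates a KKT point has been reached and the descent inequality holds trivially; this does not disrupt the telescoping sum. These consistency checks, rather than any hard estimate, are where care is needed.
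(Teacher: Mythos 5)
Your proof is correct and follows essentially the same route as the paper's: the safety condition gives the geometric contraction and anytime feasibility, the Armijo condition combined with the descent inequality \eqref{eq:descent_direction} and the uniform step-size bound from \Cref{lem:step size} gives the per-iteration decrease, and telescoping with $f(z^K) \geq f^\star_\varepsilon$ yields the ergodic bound and the limit. The additional consistency checks you flag (feasibility of every iterate, boundedness via coercivity, the $\Delta z^k = 0$ edge case) are all consistent with the paper's argument.
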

\begin{proof}
From the safety line search \eqref{eq:safety}, we have  
\begin{equation}
    h(z^k + t^k \Delta z^k) - \varepsilon^2 \leq
    (1 - \gamma) (h(z^k)-\varepsilon^2) \leq 0.\notag
\end{equation}
Similarly, from the Armijo line search \eqref{eq:armijo}, it follows that  
\begin{align}
    f(z^{k+1}) 
    &\leq f(z^k) + \alpha_{\mathrm{ls}} t^k \nabla f(z^k)^\top\Delta z^k \notag\\
    &\leq f(z^k) - \alpha_{\mathrm{ls}} t^k \|\Delta z^k\|^2 \notag \\
    &\leq f(z^k) - \alpha_{\mathrm{ls}} t_{\min} \|\Delta z^k\|^2, \notag
\end{align}
where in the second inequality we used \eqref{eq:descent_direction} and in the final inequality we used the result of \Cref{lem:step size}.
\blue{
Next, we obtain a telescoping sum by adding both sides of the inequality for $k=0,\ldots,K-1$:
$$
\sum_{k=0}^{K-1} \alpha_{\mathrm{ls}} t_{\min}\|\Delta z^k\|^2 \leq f(z^0)-f(z^{K}) \leq f(z^0)-f^\star_\varepsilon
$$
where the second inequality follows from the fact that $z^{K} \in L_{\varepsilon^2}^{-}(h)$. We then arrive at
$$
\sum_{k=0}^{K-1} \|\Delta z^k\|^2 \leq \frac{f(z^0) - f^\star_\varepsilon}{\alpha_{\mathrm{ls}}  t_{\min}}<\infty.
$$
Dividing by $K$ leads to the desired inequality. Furthermore, every term in the series is non-negative, and the series is uniformly upper-bounded; we conclude that \( \lim_{k \to \infty} \|\Delta z^k\|^2 = 0 \).
Finally, using the continuity of the square root function, we have \( \lim_{k \to \infty} \|\Delta z^k\| = 0 \).
}  
\end{proof}

\begin{figure*}[t]
    \centering
    \includegraphics[width=0.3\linewidth]{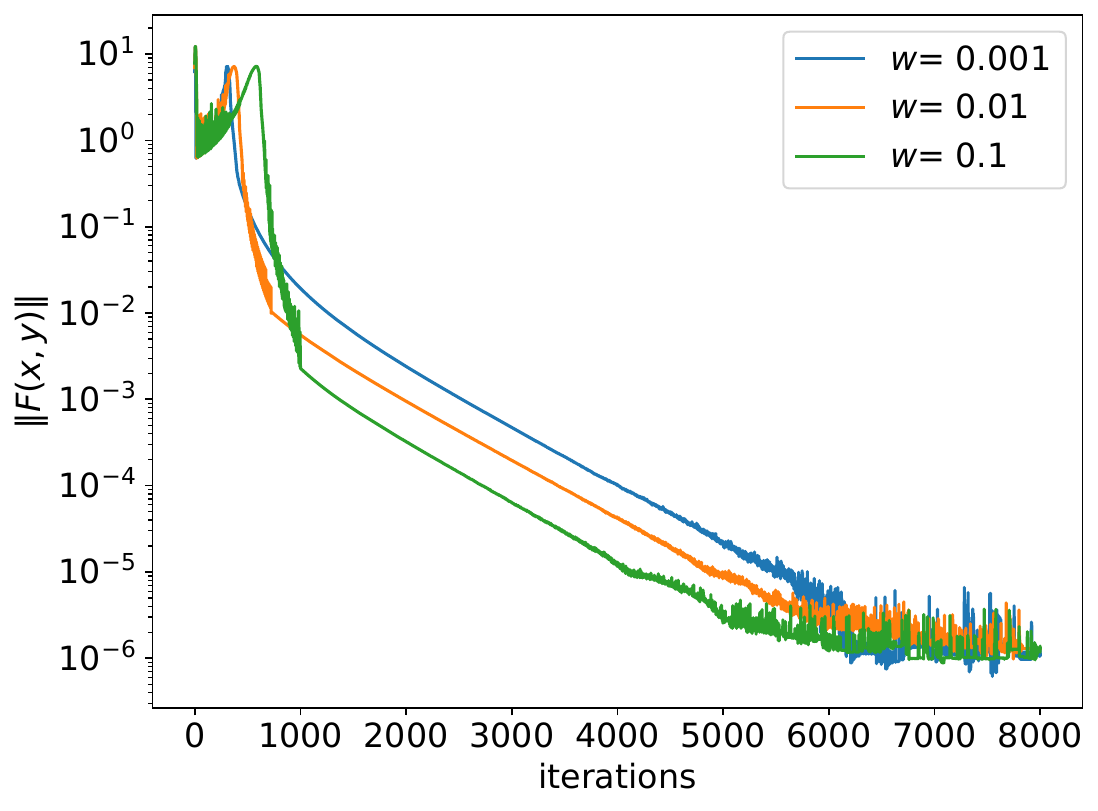}
    \includegraphics[width=0.3\linewidth]{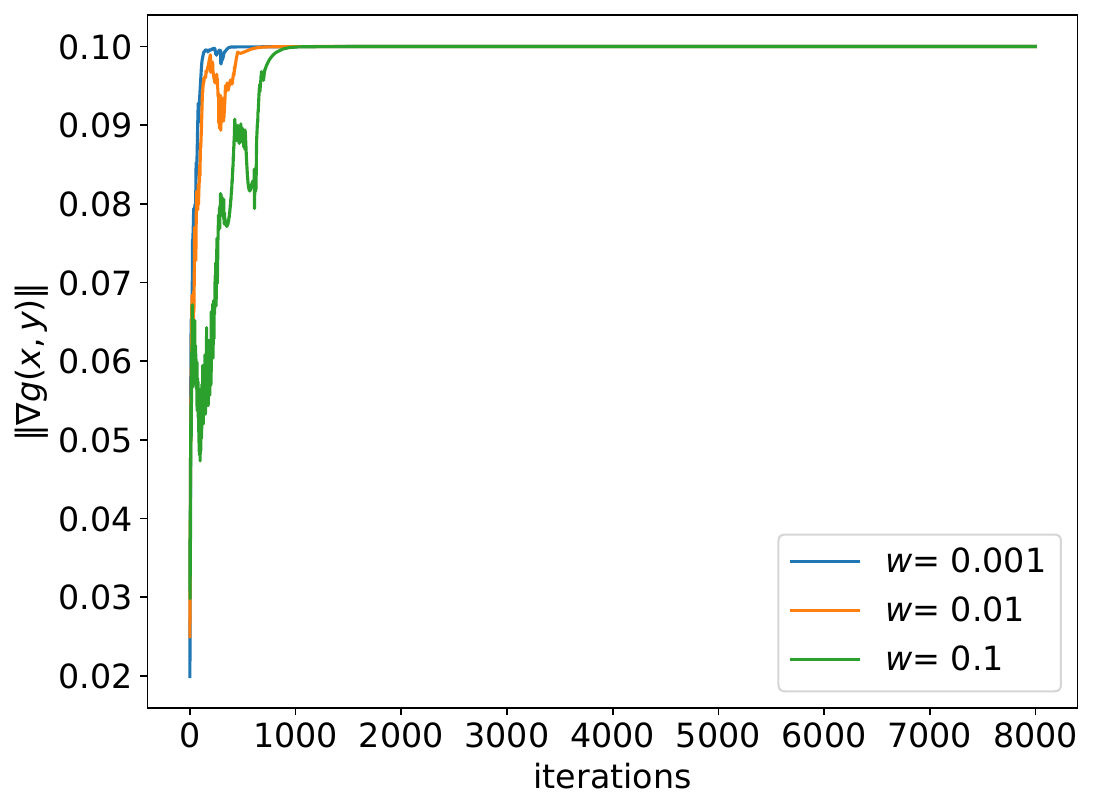}
    \includegraphics[width=0.3\linewidth]{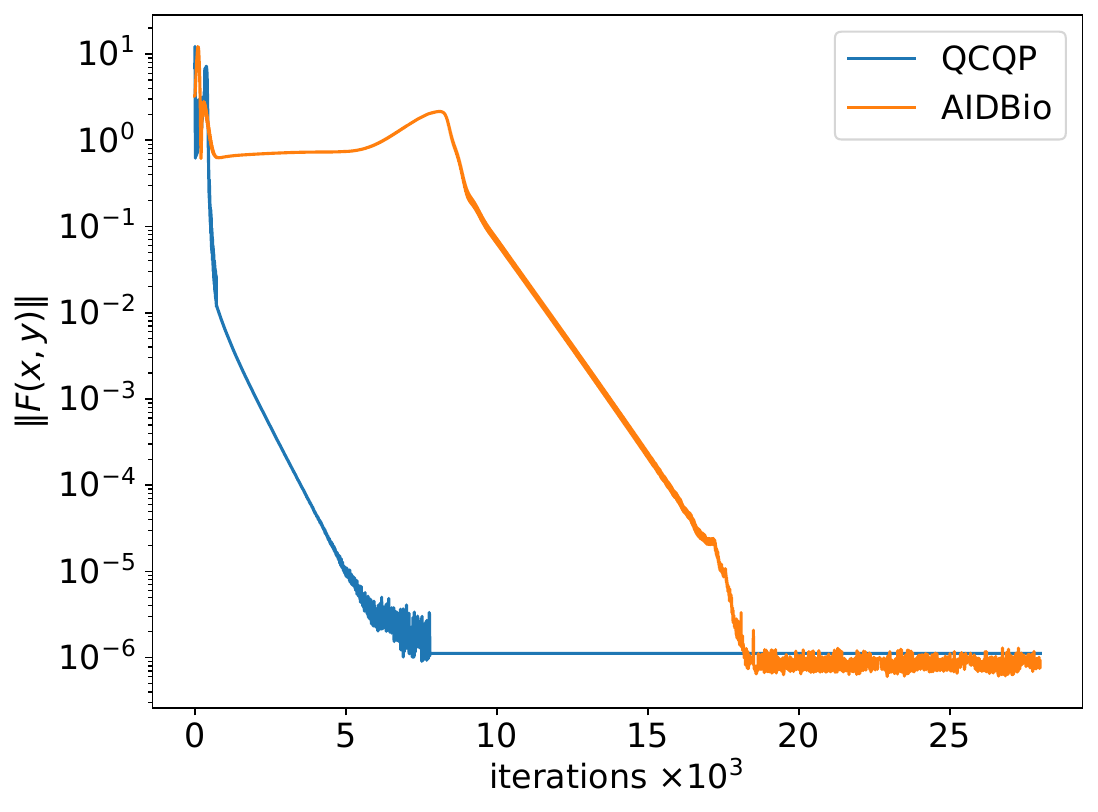}
    \caption{Synthetic examples: Ablation study on the effect of $w$: (left) the norm of hyper-gradient, (middle) the lower-level suboptimality. 
    (right) Comparison with AIDBio with $w = 0.01$.}
    \label{fig:w_ablation}
\end{figure*}

\begin{table}[t]
    \centering
    \resizebox{0.7\linewidth}{!}{ 
        \begin{tabular}{l c c c}
            \toprule
            \textbf{Method} & \textbf{LL} & \textbf{Loops} & \textbf{Complexity}  \\
            \midrule
            AIDBio & SC ($g$)  & Double & \(\mathcal{O}(\epsilon^{-1})\) \\
            \midrule
            BOME   & PL ($g$) & Double & \(\mathcal{O}(\epsilon^{-3})\) \\
            \midrule
            VPBGD  & PL ($g$)  & Double & \(\mathcal{O}(\epsilon^{-3/2})\) \\
            \midrule
            Ours   & PL ($h$)    & Single & \(\mathcal{O}(\epsilon^{-1})\) \\
            \bottomrule
        \end{tabular}
    }
    \caption{
    \small Comparison of methods in the experiments. “LL” denotes assumptions on the lower-level objective, with “SC” and “PL” indicating strong convexity and PL condition, respectively. Complexity refers to finding an \( \epsilon \)-stationary point.}
    \label{tab:compare}
\end{table}

\section{Experiments}
In this section, we present a numerical evaluation of our approach by benchmarking it against other methods (AIDBio \cite{ji2021bilevel}, BOME \cite{liu2022bome}, and VPBGD \cite{shen2023penalty}) on both a synthetic example and a data hyper-cleaning (DHC) task on the MNIST \cite{lecun2010mnist} dataset. See \Cref{tab:compare} for a concise comparison of these methods.
For the synthetic example, we compare the norm of the estimation of the hyper-gradient ($\|F(x,y)\|$) and the lower-level sub-optimality ($\|\nabla_y g(x,y)\|$) as the metrics for convergence and safety.
For the DHC problems, we plot the validation loss and test accuracy, and the norm of the step $\|\Delta z\|$ as the metric.
For all the experiments, we choose the line search parameters as $\beta = 0.5$, $\alpha_{\mathrm{ls}} = \gamma = 0.1$, and $t_{\max} = 1$.
We also choose $w \in \{10^{-1},10^{-2},10^{-3}\}$ and $\alpha_b = 0.1$. We choose $\varepsilon = 0.5$ for the neural network experiment and $\varepsilon = 0.1$ for the rest.
The code is available at \href{https://github.com/o4lc/SGF-BLO}{https://github.com/o4lc/SGF-BLO}.

\subsection{Synthetic Example}
Following \cite{sharifi2025safe}, we consider the following basic bilevel problem 
\begin{align}
        &\min_x \quad  \sin(c^\top x + d^\top y^\star(x)) +  \log(\|x+y^\star(x)\|^2 + 1) \notag \\ 
        &\text{s.t.} \quad  y^\star(x) \in \argmin_y  \tfrac{1}{2} \|Hy - x\|^2,\notag
\end{align}
where $x, y, c, d \in \mathbb{R}^{20}$ and $H \in \mathbb{R}^{20 \times 20}$ is randomly generated such that its condition number is no larger than 10.
We use this example as a proof of concept and to conduct ablation studies on the parameter. \Cref{fig:w_ablation} shows a comparison with AIDBio and the effect of the hyper‐parameter \(w\) on the method's convergence. 
A larger \(w\) places more weight on the lower-level constraint, slowing the convergence of \(\|\nabla_y g(\cdot)\|\) to \(\varepsilon\).

\subsection{Data Hyper-Cleaning (DHC)}
We consider the DHC task \cite{franceschi2017forward, shaban2019truncated, shen2023penalty} as follows:
\begin{align}
    &\min_x \frac{1}{N_{\text{val}}} \sum_{(a_{i}, b_{i}) \in \mathcal{D}_{\text{val}}} \mathcal{L}(f_{y^\star(x)} (a_{i}) , b_{i}) \notag \\
    &\text{s.t.} y^\star(x) \! \in \! \argmin_y
    \frac{1}{N_{\text{tr}}} 
    \sum_{(a_{i}, b_{i}) \in \mathcal{D}_{\text{tr}}} \! \sigma(x_i) \mathcal{L}(f_{y} (a_{i}), b_{i}) \! + \! \lambda  \|y\|^2\!, \notag
\end{align}
where $\lambda$, $\sigma(\blue{\cdot})$, and $\mathcal{L}(\blue{\cdot})$ represent the regularizer, sigmoid function, and cross-entropy loss, respectively, and $f_y(\blue{\cdot})$ denotes a classifier parameterized by $y$.
Furthermore, $(a, b) \in \mathcal{D}$ denotes the data and labels in dataset $\mathcal{D}$.

We experiment with two choices of \(f_y\): one using a linear classifier (convex) and the other using a neural network (non-convex) with one hidden layer with 50 neurons and ReLU activation.
\Cref{fig:DHC} and \ref{fig:nn} compare our method with BOME and VPBGD in terms of validation loss, test accuracy, and the norm of the chosen step for different corruption rates on the linear classifier and neural network classifier, respectively.


\begin{figure}[t]
    \centering
    \includegraphics[width=0.49\linewidth]{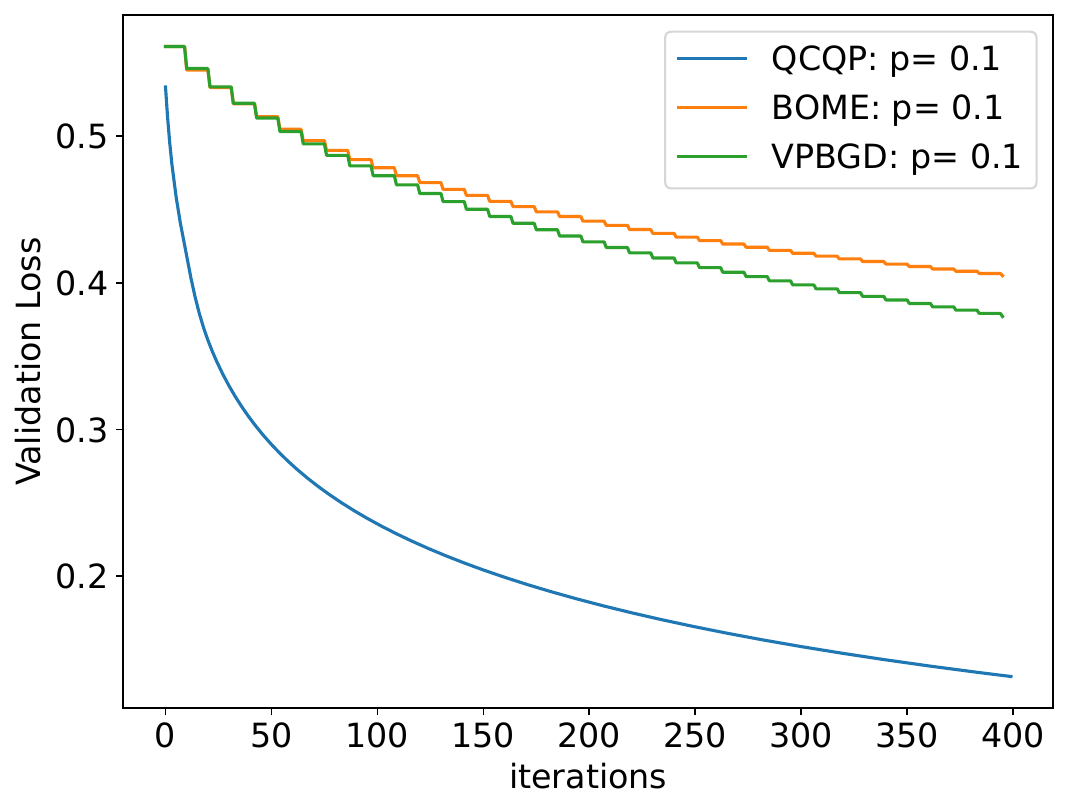}
    \includegraphics[width=0.49\linewidth]{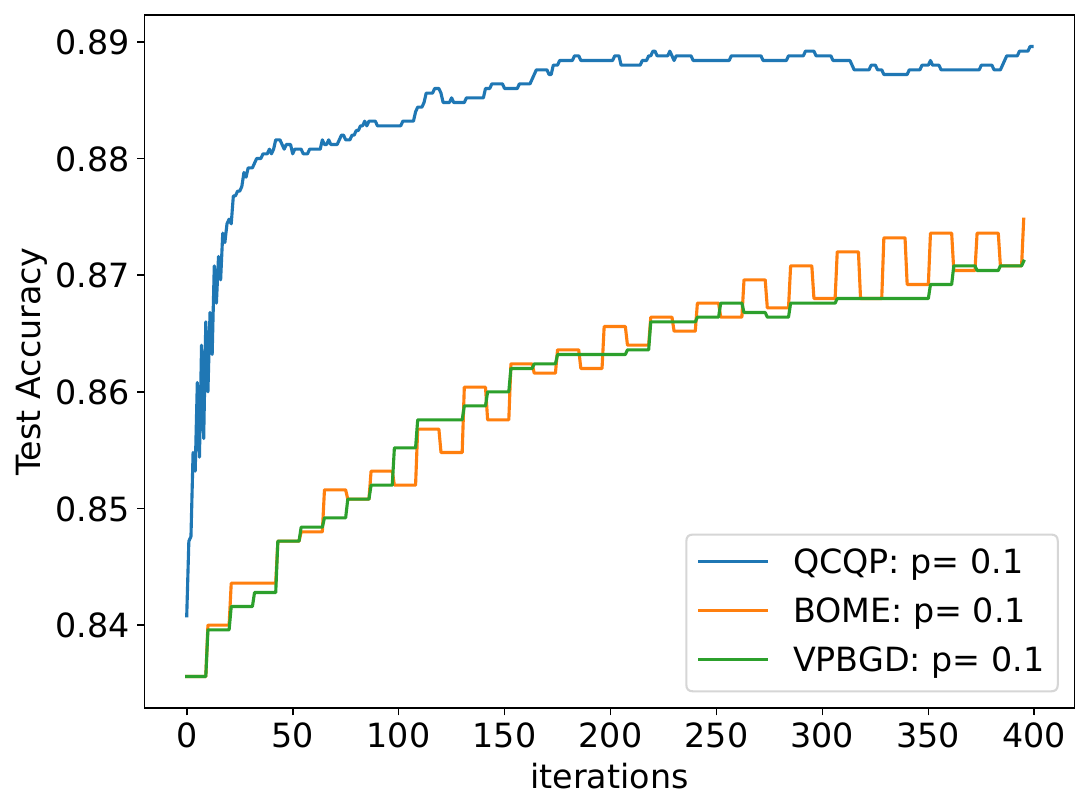}
    
    \includegraphics[width=0.49\linewidth]{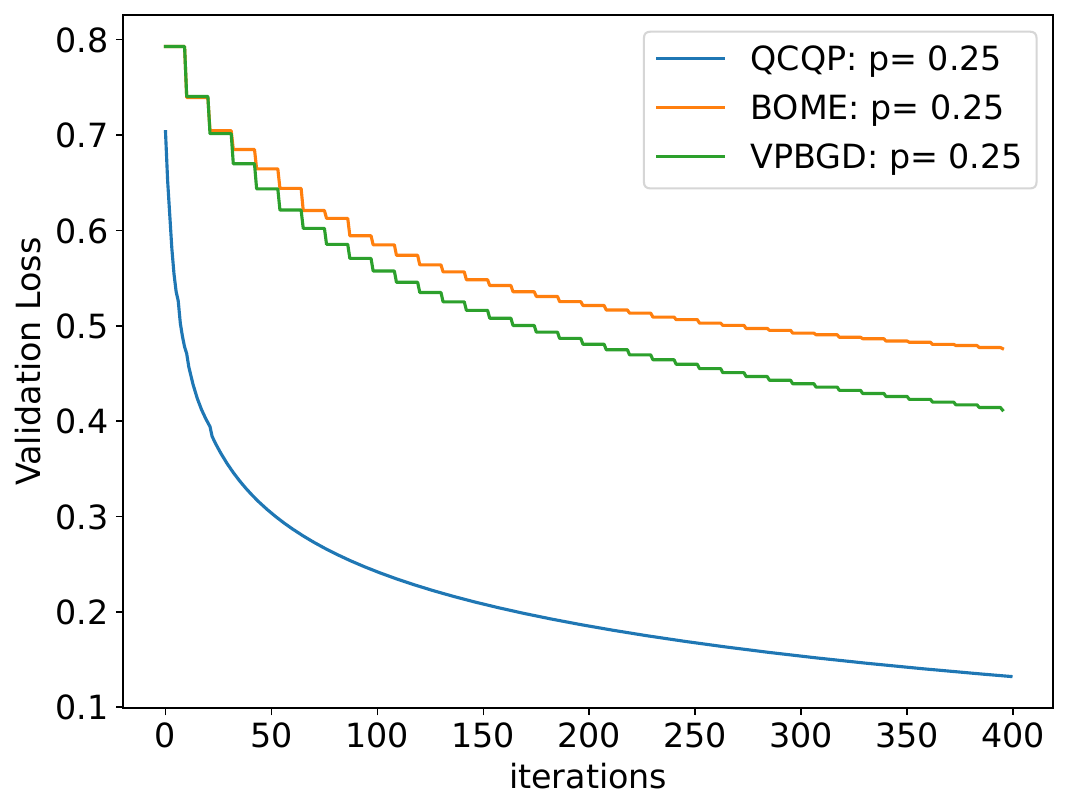}
    \includegraphics[width=0.49\linewidth]{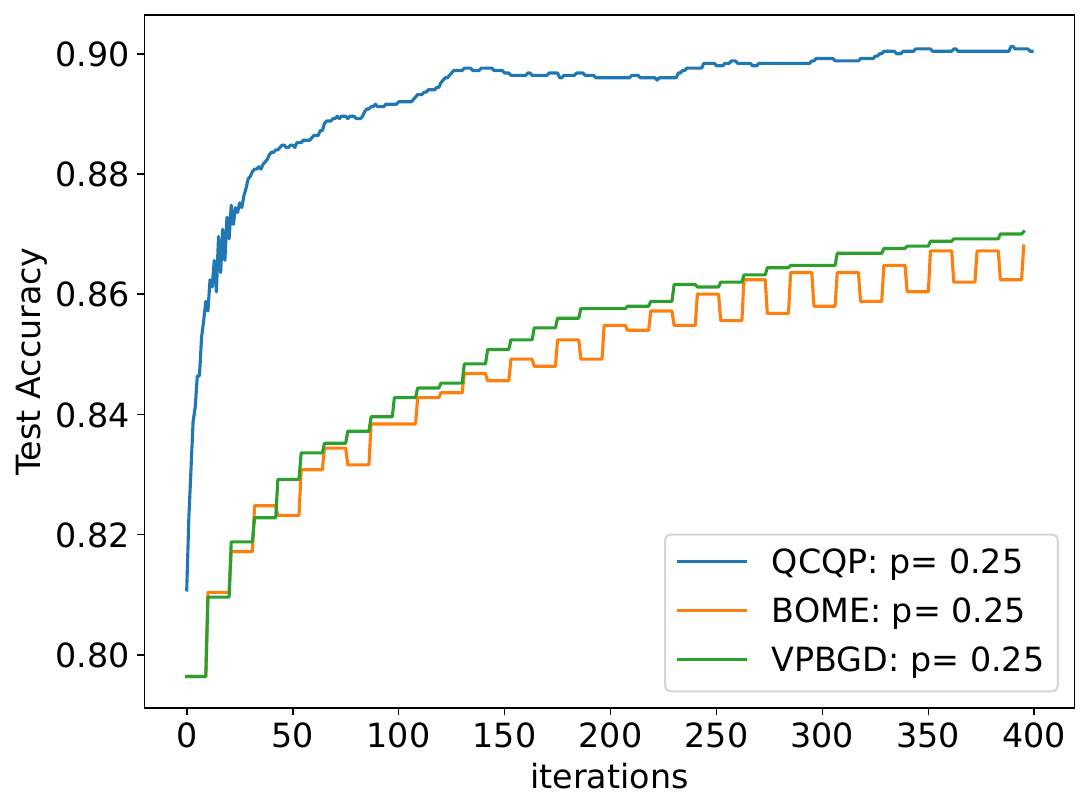}
    \caption{DHC task with a linear classifier: Comparison of validation loss and test accuracy for $p=\{10\%, 25\%\}$.}
    \label{fig:DHC}
\end{figure}

\begin{figure*}[t]
    \centering
    \includegraphics[width=0.3\linewidth]{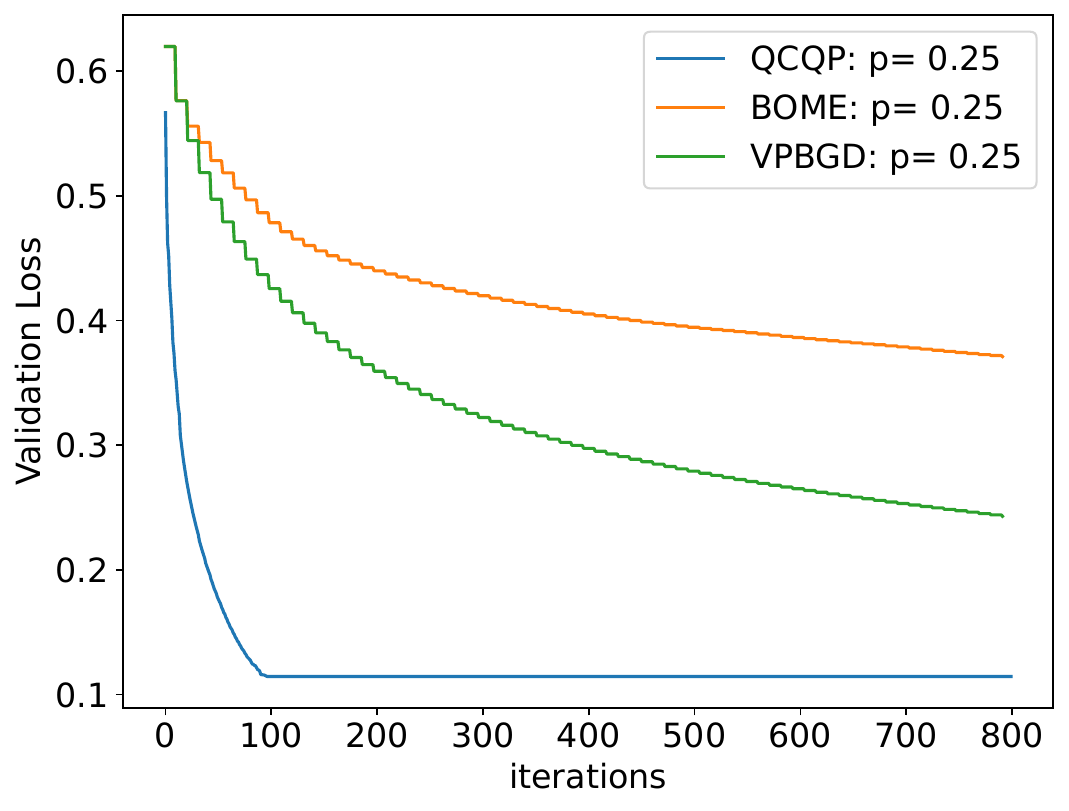}
    \includegraphics[width=0.3\linewidth]{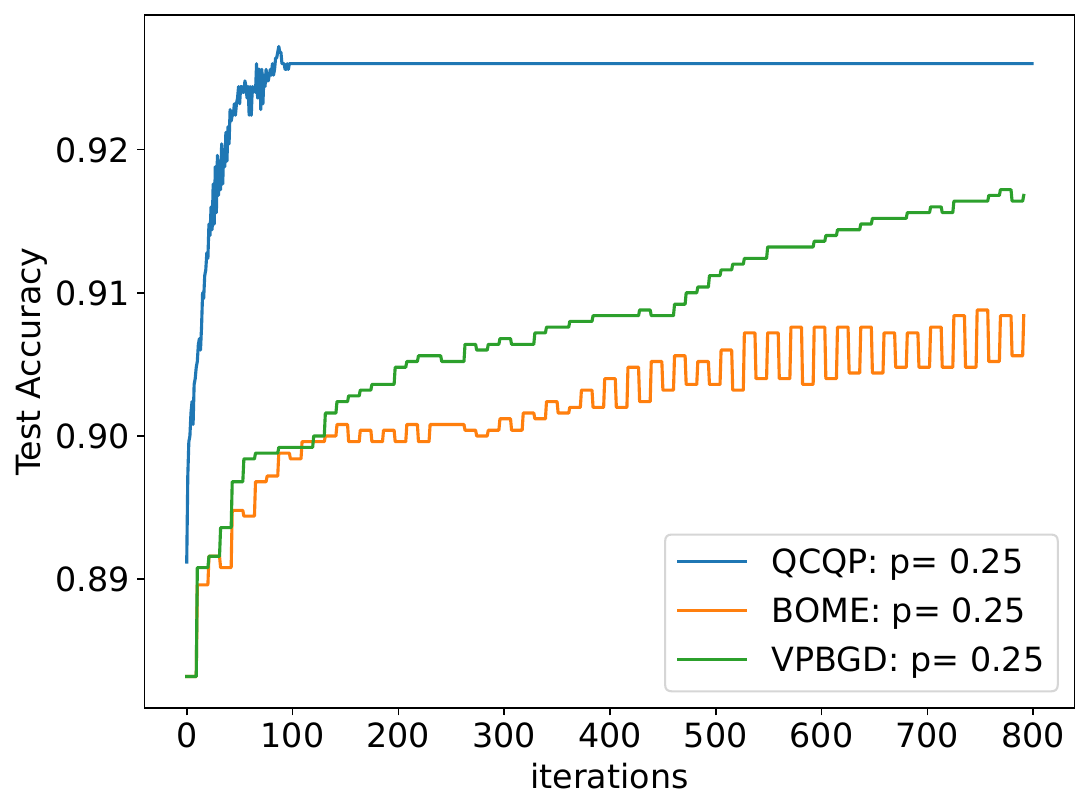}
    \includegraphics[width=0.3\linewidth]{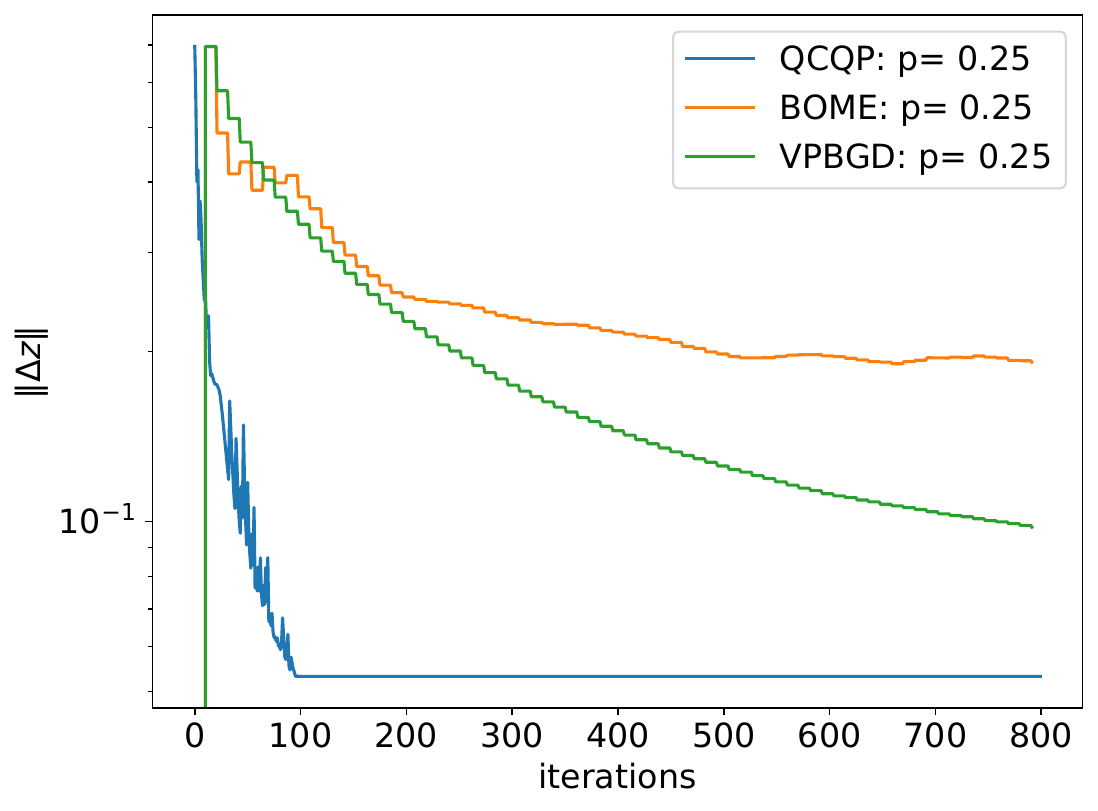}
     \caption{DHC task with neural network classifier: Comparison of validation loss and test accuracy and $\|\Delta z\|$ for $p=25\%$.}
    \label{fig:nn}
\end{figure*}

\section{Conclusion}

We presented a novel, tuning-free algorithm for bilevel optimization by discretizing a continuous-time framework based on control barrier functions. The method operates in a single loop, computes search directions via sequential convex QCQPs, and adaptively selects step sizes through a line search that ensures anytime feasibility and descent. We proved an ergodic convergence rate of $\mathcal{O}(1/k)$ under mild regularity assumptions and demonstrated the practical effectiveness of the method through numerical experiments. Potential future directions for our approach include investigating stochastic settings and incorporating lower-level constraints.



\section{Appendix}\label{sec:appendix}


\subsection{Derivations and Proofs}
\begin{remark}\label{rm:qcqpsol}
    The solution of \eqref{eq:qcqp} can be computed by solving the following equivalent problem
    \begin{align}
        \min_{\Delta z} ~~& \frac{1}{2}\|\Delta z + \nabla f(z)\|^2 \notag \\
        s.t. ~~& \|\Delta z + \frac{\nabla h(z)}{2w}\|^2 \leq \|\frac{\nabla h(z)}{2w}\|^2 - \frac{\alpha_b}{w}(h(z) - \varepsilon^2) \notag
    \end{align}
    which is the projection of $(-\nabla f(z))$ on a ball. This projection has a closed-form solution of the following form 
    \begin{align}
        \Delta z =
        \begin{cases}
         -\nabla f(z) &\text{if } \|-\nabla f(z) - c\|^2 \leq r^2 \\
          c + r \frac{-\nabla f(z) - c}{\|-\nabla f(z) - c\|} &\text{else}
        \end{cases}\notag
    \end{align}
    where $c = -\frac{\nabla h(z)}{2w}$, and $r = \sqrt{\|\frac{\nabla h(z)}{2w}\|^2 - \frac{\alpha_b}{w}(h(z) - \varepsilon^2)}$.
    %
\end{remark}
{This problem is feasible when $r \geq 0$.
Moreover, if \( r \geq 0 \), the resulting \( \Delta z \) is norm-bounded.
This is because \( \Delta z \) is either equal to \( -\nabla f(z) \) or lies on the ball defined by \( c \) and \( r \).
}

\smallskip \noindent
\textbf{Proof of \Cref{thm:QCQP}:}
The KKT conditions for \ref{eq:qcqp} are
\begin{align}\label{eq:KKTQCQP}
\begin{cases}
    \Delta z + \nabla f(z) + \lambda \nabla h(z) + 2 \lambda w \Delta z = 0, \\
    \nabla h(z)^\top \Delta z \leq -\alpha_b (h(z)-\varepsilon^2) - w\|\Delta z\|^2,\\
    \lambda \geq 0, \\
    \lambda \big( 
        \nabla h(z)^\top \Delta z +\alpha_b (h(z)-\varepsilon^2) + w\|\Delta z\|^2 \big) = 0, 
\end{cases}
\end{align}
where $\lambda \geq 0$ is the Lagrangian multiplier. 

\begin{proof}[\ref{thm:qcqp-feas}]
    The constraint of  \eqref{eq:qcqp} is minimized at 
    $\Delta z = \frac{-\nabla h(z)}{2w}$ and its minimum is
    $
    \frac{-1}{4w}\|\nabla h(z)\|^2 + \alpha_b (h(z)-\varepsilon^2).
    $
    If $h(z)<\varepsilon^2$, the minimum is trivially negative. 
    If $h(z)=\varepsilon^2$, the minimum is still negative, i.e., \eqref{eq:qcqp} is strictly feasible, because $\nabla h(z) \neq 0$, according to Assumption \ref{assumption:LL-2}.
    According to Slater's condition \cite{nesterov2018lectures}, this also implies strong duality of \eqref{eq:qcqp}.
\end{proof}


\begin{proof}[\ref{thm:qcqp-descent}]
    Using \eqref{eq:KKTQCQP}, the KKT conditions of the \eqref{eq:qcqp}, we have
    \begin{align} 
        &\nabla f(z)^\top \Delta z = - \|\Delta z\|^2 - \lambda \nabla h(z)^\top \Delta z - 2 \lambda w \|\Delta z\|^2  \notag \\
            &= - \|\Delta z\|^2 + \lambda \alpha_b (h(z)-\varepsilon^2) + \lambda w \|\Delta z\|^2 - 2 \lambda w \|\Delta z\|^2 \notag \\
            &= - \|\Delta z\|^2 + \lambda \alpha_b (h(z)-\varepsilon^2) - \lambda w \|\Delta z\|^2 \notag \\
            &\leq - \|\Delta z\|^2 - \lambda w \|\Delta z\|^2 \leq - \|\Delta z\|^2, \notag
    \end{align}    
    where in the penultimate inequality we used $z \in L^{-}_{\varepsilon^2}(h)$.
\end{proof}


\begin{proof}[\ref{thm:qcqp-kkt}]
At $\Delta z = 0$, \eqref{eq:KKTQCQP} reduces to the KKT conditions of \eqref{eq:bilevel-approx-1}:
\begin{align}
    \begin{cases}
        \nabla f(z) + \lambda \nabla h(z) = 0, \\
        h(z) - \varepsilon^2 \leq 0,~ 
        \lambda \geq 0, \\
        \lambda (h(z) - \varepsilon^2) = 0 .
    \end{cases}\notag
\end{align}
%
\vspace{-1.2cm}
\end{proof}

\phantom{\cite{auslender2010moving, birgin2018augmented, abolfazli2025perturbed}}
\section{Extended Appendix}
\subsection{Additional Remarks}
\begin{remark}\label{rm:MFCQ}
    The following assumptions may be used as alternatives to Assumption \eqref{assumption:LL-2}, as they are sufficient for the purposes of our analysis.
    \begin{enumerate}
        \item The Mangasarian-Fromovitz Constraint Qualification (MFCQ) holds for \eqref{eq:bilevel-approx-1}:
    $\exists \Delta z ~s.t.~ \nabla h(z)^\top \Delta z < 0$ when $h(z)=\varepsilon^2$. 
    \item The Linear Independence Constraint Qualification (LICQ) holds for \eqref{eq:bilevel-approx-1}: $\nabla h(z)\neq 0$ when $h(z)=\varepsilon^2$.
    \end{enumerate}
\end{remark}

Note that LICQ and MFCQ are equivalent if the constraint is scalar; this can be verified from their definition.
Moreover, Assumption \ref{assumption:LL-2} is a stronger assumption than \Cref{rm:MFCQ} because Assumption \ref{assumption:LL-2} is equivalent to LICQ/MFCQ holding for \eqref{eq:bilevel-approx-1} for all $\varepsilon > 0$.
Moreover, according to Assumption \ref{assumption:LL-2}, or equivalently \Cref{rm:MFCQ}, strong duality holds for \eqref{eq:bilevel-approx-1}.

\begin{remark}[Comparison with MBA \cite{auslender2010moving}]
    The Moving Ball Algorithm (MBA) is a method for solving smooth constrained minimization problems.
    A key difference between MBA and \Cref{alg:method} is that the MBA assumes the knowledge of the Lipschitz constants of the gradients and uses the descent lemma to upper bound each constraint and then solves a QCQP to find the next iterate directly.
    Our QCQP, on the other hand, makes no assumptions regarding the Liptchitz constant and performs a line search to ensure anytime feasibility.
\end{remark}

\subsection{Relationship between Bilevel Formulations}
We begin by stating the following definition, which will be central to the analysis of the relationship between the solutions of different bilevel formulations.
Consider an optimization problem of the form:
\begin{align}\label{eq:genericOP}
    \min_x f(x) \quad \text{s.t.}\quad  h(x) = 0,\  g(x) \leq 0, 
\end{align}
where $f: \mathbb{R}^n \mapsto \mathbb{R}$, $h: \mathbb{R}^n \mapsto \mathbb{R}^m$ , and $g: \mathbb{R}^n \mapsto \mathbb{R}^p$.
\begin{definition}[${\epsilon}$-KKT Points \cite{birgin2018augmented}]\label{def:epsKKT}
Given ${\epsilon} \geq 0$, we say that $x\in \mathbb{R}^n$ is an ${\epsilon}$-KKT for the problem \eqref{eq:genericOP} if there exists $\lambda \in \mathbb{R}^m$ and $\mu \in \mathbb{R}_{+}^p$ such that
$$
\begin{cases}
    \|\nabla f(x) + \sum_{i=1}^m \lambda_i \nabla h_i(x) + \sum_{i=1}^p \mu_i \nabla g_{i}(x)\| \leq {\epsilon}, \\
    \|h(x)\| \leq {\epsilon}, \|g(x)\| \leq {\epsilon}, \\
    \mu_i =0 \text{ whenever } g_i(x) < -{\epsilon}, \ i = 1, \ldots, p.
\end{cases}
$$
\end{definition}
Note that with ${\epsilon} = 0$, the KKT conditions are recovered. 



Consider the stationary-seeking reformulation of \eqref{eq:BLO},
\begin{align}\label{eq:BLO_station}
    \min_{x \in \mathbb{R}^n,\, y\in \mathbb{R}^m} \; f(x,y) \quad
    \text{s.t.} \quad \nabla_y g(x, y) = 0.
\end{align}
Define $H = [\nabla^2_{yx} g(x,y) \ \nabla^2_{yy} g(x,y)]$. The $\tilde{\epsilon}$-KKT conditions of \eqref{eq:BLO_station} are 
    $\|\nabla f(z) + H^\top \nu\| \leq \tilde{\epsilon}, ~
    \|\nabla_y g(z)\| \leq \tilde{\epsilon}$.  
To avoid handling multiple constraints, \cite{sharifi2025safe} reformulates \eqref{eq:BLO_station} so the new formulation has only a single constraint,
\begin{align}\label{eq:BLO_station2}
    \min_{x \in \mathbb{R}^n,\, y\in \mathbb{R}^m} \; f(x,y) \quad
    \text{s.t.} \ \|\nabla_y g(x, y)\|^2 = 0.
\end{align}
A downside of this reduction is that by definition, \eqref{eq:BLO_station2} cannot satisfy either LICQ or MFCQ, because whenever the constraint is active, i.e., $\nabla_y g(x,y) = 0$, the gradient of the constraint, $[\nabla^2_{yx} g(x,y) \ \nabla^2_{yy} g(x,y)]^\top \nabla_y g(x,y)$ is also zero.
However, \cite{abolfazli2025perturbed} shows that the $\tilde{\epsilon}$-KKT points of \eqref{eq:BLO_station2} also satisfy $\tilde{\epsilon}$-KKT conditions of \eqref{eq:BLO_station} with a different choice of dual variable. 

Now consider the relaxed formulation in \eqref{eq:bilevel-approx-1}. According to \Cref{def:epsKKT}, the $\tilde{\epsilon}$-KKT conditions of \eqref{eq:bilevel-approx-1} are as follows:
\[
\begin{cases}
    \|\nabla f(z) + \lambda H^\top \nabla_y g(z)\| \leq \tilde{\epsilon}, \\
    \|\nabla_y g(z)\|^2 - \varepsilon^2 \leq \tilde{\epsilon},~
    \lambda \geq 0, \\
    \lambda =0 \text{ whenever } \|\nabla_y g(z)\|^2 - \varepsilon^2 < -\tilde{\epsilon},
\end{cases}
\]

With this definition, every $\tilde{\epsilon}$-KKT point of \eqref{eq:bilevel-approx-1} is also an $\max(\tilde{\epsilon}, \sqrt{\varepsilon^2 + \tilde{\epsilon}})$-KKT point of \eqref{eq:BLO_station} with $\nu = \lambda \nabla_y g(z)$.
In particular, the KKT points of \eqref{eq:bilevel-approx-1} (i.e., when $ \tilde{\epsilon}=0$) are $\varepsilon$-KKT points of \eqref{eq:BLO_station}.

\vspace{-3mm}

\subsection{More on \Cref{thm:convergence}}
\blue{
Let $G(z) \colon z \mapsto \Delta z$ be the map defined by \eqref{eq:qcqp}. The exact formulation for this map is provided in \Cref{rm:qcqpsol}. We first prove that $G(\cdot)$ is continuous. 

\begin{proposition}\label{prop:map_cont}
    The map $G(\cdot)$ is continuous in $z$.
\end{proposition}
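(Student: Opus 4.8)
The plan is to exploit the closed-form expression for $G$ recorded in \Cref{rm:qcqpsol}, which exhibits $\Delta z = G(z)$ as the Euclidean projection of the point $-\nabla f(z)$ onto the closed ball $B(c(z),r(z))$ with center $c(z) = -\nabla h(z)/(2w)$ and radius $r(z) = \sqrt{\|\nabla h(z)/(2w)\|^2 - (\alpha_b/w)(h(z)-\varepsilon^2)}$. I would then reduce the continuity of $G$ to two separate ingredients: (i) continuity of the data map $z \mapsto (-\nabla f(z),\,c(z),\,r(z))$, and (ii) joint continuity of the projection operator $P(p,c,r) := \Pi_{B(c,r)}(p)$ in all three of its arguments. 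The conclusion then follows by composition, $G(z) = P\big(-\nabla f(z),\,c(z),\,r(z)\big)$.

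For ingredient (i), continuity of $-\nabla f$ is immediate from \Cref{assumption:UL}, while continuity of $c(z)$ and of the radicand $r(z)^2$ follows from the (locally Lipschitz, hence) continuous dependence of $\nabla h$ and of $h$ on $z$, as guaranteed by \Cref{assumption:LL} and \Cref{assum:coercive}. The one point that needs care is the square root: on the feasible set $L_{\varepsilon^2}^{-}(h)$ we have $h(z)-\varepsilon^2 \leq 0$, so $r(z)^2 \geq \|\nabla h(z)/(2w)\|^2 \geq 0$, and since $t \mapsto \sqrt{t}$ is continuous on $[0,\infty)$, the map $z \mapsto r(z)$ is continuous. (If one wants continuity beyond the feasible set, the statement should be restricted to where the QCQP is feasible, i.e. $r(z)^2 \geq 0$, which is exactly the region traversed by the iterates.)

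For ingredient (ii), I would use the shift identity $P(p,c,r) = c + P(p-c,0,r)$ together with the explicit radial formula $P(q,0,r) = q$ when $\|q\| \leq r$ and $P(q,0,r) = (r/\|q\|)\,q$ otherwise. The two branches agree on the sphere $\|q\| = r$, so the piecewise map is continuous; quantitatively, projection onto a fixed convex set is nonexpansive in $p$, the shift identity gives Lipschitz dependence on $c$, and a short case check yields $\|P(q,0,r_1) - P(q,0,r_2)\| \leq |r_1 - r_2|$, so $P$ is in fact jointly Lipschitz. Composing with the continuous data map from (i) delivers continuity of $G$.

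The only delicate aspect is the transition between the interior branch $\Delta z = -\nabla f(z)$ and the boundary branch $\Delta z = c + r(-\nabla f(z)-c)/\|-\nabla f(z)-c\|$, compounded by the simultaneous variation of the radius. I expect no genuine obstacle here: the matching of the two branches on $\|{-}\nabla f(z)-c(z)\| = r(z)$ removes any jump, and the $1$-Lipschitz dependence on $r$ controls the radius variation, so the argument reduces to carefully verifying these elementary estimates rather than to any substantive difficulty.
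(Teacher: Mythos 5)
Your proposal is correct and follows essentially the same route as the paper: both arguments start from the closed-form piecewise expression in \Cref{rm:qcqpsol}, verify continuity of each branch (using continuity of $\nabla f$, $\nabla h$, and $h$, and nonvanishing of the denominator on the boundary branch), and check that the two branches agree on the transition set $\|{-}\nabla f(z)-c(z)\|=r(z)$. Your treatment is somewhat more quantitative (joint Lipschitz continuity of the projection in $(p,c,r)$ and explicit care with the square root on the feasible set), but this is a refinement of the same argument rather than a different approach.
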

\begin{proof}
    According to \Cref{rm:qcqpsol}, map $G(\cdot)$ is a piecewise function, and to show the continuity of this map, we will show that the sub-functions are continuous on the corresponding intervals and there is no discontinuity at an endpoint of any subdomain within that interval \cite{abbott2015understanding}.
    According to Assumption \ref{assumption:UL}, $\Delta z = -\nabla f(z)$ is continuous in $z$. 
    This is also the case for
    $\Delta z = c + r \frac{-\nabla f(z) - c}{\|-\nabla f(z) - c\|}$, because as long as $r > 0$, which is ensured by feasibility of \eqref{eq:qcqp}, $\|-\nabla f(z) -c\| > 0$.
    Finally, whenever $\|-\nabla f(z) - c\| = r$, we have that 
    $$
    c + r \frac{-\nabla f(z) - c}{\|-\nabla f(z) - c\|} = -\nabla f(z) ,
    $$
    which concludes the proof.
\end{proof}


\begin{proposition}
    The limit point of any convergent subsequence of $\{z^k\}$ is a KKT point of problem \eqref{eq:bilevel-approx-1}.
\end{proposition}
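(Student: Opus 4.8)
The plan is to pass to the limit along the given convergent subsequence and combine three ingredients already in hand: anytime feasibility, the vanishing of the search directions, and the continuity of the solution map $G(\cdot)$. Let $z^{k_j} \to z^\ast$ be a convergent subsequence. First I would verify that the limit is feasible for \eqref{eq:bilevel-approx-1}. By the anytime-feasibility part of \Cref{thm:convergence}, every iterate satisfies $h(z^{k_j}) - \varepsilon^2 \leq 0$; since $h$ is continuous (Assumption \ref{assumption:LL}), letting $j \to \infty$ gives $h(z^\ast) - \varepsilon^2 \leq 0$, so $z^\ast \in L^{-}_{\varepsilon^2}(h)$ and in particular the QCQP is well posed and strictly feasible at $z^\ast$ by \Cref{thm:qcqp-feas}.

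Next I would identify the value of $G$ at the limit. By \Cref{prop:map_cont} the map $G$ is continuous, so $\Delta z^{k_j} = G(z^{k_j}) \to G(z^\ast)$. On the other hand, \Cref{thm:convergence} gives $\lim_{k\to\infty}\|\Delta z^k\| = 0$ for the entire sequence, hence also along the subsequence $\{k_j\}$. Matching the two limits yields $\|G(z^\ast)\| = \lim_j \|\Delta z^{k_j}\| = 0$, that is, $G(z^\ast) = 0$: the QCQP solved at $z^\ast$ returns the zero search direction.

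It then remains to convert $G(z^\ast) = 0$ into the KKT conditions of \eqref{eq:bilevel-approx-1}. Because $z^\ast \in L^{-}_{\varepsilon^2}(h)$, Slater's condition holds (strict feasibility from \Cref{thm:qcqp-feas}), so strong duality guarantees a multiplier $\lambda^\ast \geq 0$ for which the KKT system \eqref{eq:KKTQCQP} of \eqref{eq:qcqp} holds at the optimizer $\Delta z = 0$. Substituting $\Delta z = 0$ into \eqref{eq:KKTQCQP}, exactly as in the proof of \Cref{thm:qcqp-kkt}, collapses the stationarity, primal-feasibility, dual-feasibility, and complementary-slackness lines into $\nabla f(z^\ast) + \lambda^\ast \nabla h(z^\ast) = 0$, $h(z^\ast) - \varepsilon^2 \leq 0$, $\lambda^\ast \geq 0$, and $\lambda^\ast(h(z^\ast) - \varepsilon^2) = 0$; using $\nabla h(z^\ast) = H^\top \nabla_y g(z^\ast)$, these are precisely the KKT conditions of \eqref{eq:bilevel-approx-1}. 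Hence $(z^\ast, \lambda^\ast)$ is a KKT pair.

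I expect the only delicate point to be the last step: ensuring that $G(z^\ast) = 0$ genuinely certifies stationarity, i.e.\ that a valid nonnegative multiplier exists in the limit rather than $\Delta z = 0$ being a spurious zero of the map. This is exactly where strict feasibility/Slater and the equivalence between the QCQP's KKT system at $\Delta z = 0$ and the KKT system of \eqref{eq:bilevel-approx-1} (both from \Cref{thm:QCQP}) are indispensable; the continuity and feasibility arguments above are otherwise routine.
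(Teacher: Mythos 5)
Your proposal is correct and follows essentially the same route as the paper: pass to the limit along the subsequence, use anytime feasibility plus continuity of $h$ to get $h(\bar z)\le\varepsilon^2$, combine $\|\Delta z^k\|\to 0$ with the continuity of $G$ (Proposition~\ref{prop:map_cont}) to conclude $G(\bar z)=0$, and invoke Theorem~\ref{thm:qcqp-kkt} to read off the KKT conditions of \eqref{eq:bilevel-approx-1}. Your added care about strict feasibility/Slater at the limit point and the existence of the multiplier is a slightly more explicit rendering of the final step, but not a different argument.
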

\begin{proof}
    From the safety line search \eqref{eq:safety}, we have  
    \begin{equation}
        h(z^k + t^k \Delta z^k) - \varepsilon^2 \leq
        (1 - \gamma) (h(z^k)-\varepsilon^2) \leq 0,\notag
    \end{equation} 
    which implies that $h(z^{k+1})< \varepsilon^2$ for any $k\geq 0$. Therefore, from Assumption \ref{assum:coercive}, we conclude that $\{z^k\}_{k\geq 0}$ is a bounded sequence in $\mathbb{R}^{n+m}$.
    Based on the Bolzano-Weierstrass theorem \cite{bartle2000introduction}, $\{z^{k}\}_k$ has at least one convergent subsequence. Denote this subsequence by $\{z^{k_j}\}_j$ with the limit point $\bar{z}$. From \Cref{thm:convergence}, we have $\lim_{j\to \infty} G(z^{k_j}) = 0$. 
    Now, using the continuity of $G(\cdot)$ from \Cref{prop:map_cont}, we conclude that $$0=\lim_{j\to \infty} G(z^{k_j}) = G(\lim_{j\to \infty} z^{k_j}) = G(\bar{z}).$$
    Therefore, by invoking \Cref{thm:qcqp-kkt}, we show that $\bar{z}$ satisfies the KKT condition of problem \eqref{eq:bilevel-approx-1}.
\end{proof}
}
\vspace{-5mm}

\bibliographystyle{ieeetr}
\bibliography{bib}

\begin{thebibliography}{10}

\bibitem{franceschi2018bilevel}
L.~Franceschi, P.~Frasconi, S.~Salzo, R.~Grazzi, and M.~Pontil, ``Bilevel programming for hyperparameter optimization and meta-learning,'' in {\em International Conference on Machine Learning}, pp.~1568--1577, PMLR, 2018.

\bibitem{gao2022value}
L.~L. Gao, J.~Ye, H.~Yin, S.~Zeng, and J.~Zhang, ``Value function based difference-of-convex algorithm for bilevel hyperparameter selection problems,'' in {\em International conference on machine learning}, pp.~7164--7182, PMLR, 2022.

\bibitem{landry2019bilevel}
B.~Landry, J.~Lorenzetti, Z.~Manchester, and M.~Pavone, ``Bilevel optimization for planning through contact: A semidirect method,'' in {\em The International Symposium of Robotics Research}, pp.~789--804, Springer, 2019.

\bibitem{sun2021fast}
W.~Sun, G.~Tang, and K.~Hauser, ``Fast uav trajectory optimization using bilevel optimization with analytical gradients,'' {\em IEEE Transactions on Robotics}, vol.~37, no.~6, pp.~2010--2024, 2021.

\bibitem{olkin2024bilevel}
Z.~Olkin and A.~D. Ames, ``Bilevel optimization for real-time control with application to locomotion gait generation,'' {\em arXiv preprint arXiv:2409.12366}, 2024.

\bibitem{pedregosa2016hyperparameter}
F.~Pedregosa, ``Hyperparameter optimization with approximate gradient,'' in {\em Proceedings of the 33nd International Conference on Machine Learning ({ICML})}, 2016.

\bibitem{domke2012generic}
J.~Domke, ``Generic methods for optimization-based modeling,'' in {\em Proceedings of the Fifteenth International Conference on Artificial Intelligence and Statistics}, pp.~318--326, 2012.

\bibitem{ghadimi2018approximation}
S.~Ghadimi and M.~Wang, ``Approximation methods for bilevel programming,'' {\em arXiv preprint arXiv:1802.02246}, 2018.

\bibitem{ji2021bilevel}
K.~Ji, J.~Yang, and Y.~Liang, ``Bilevel optimization: Convergence analysis and enhanced design,'' in {\em International Conference on Machine Learning}, pp.~4882--4892, 2021.

\bibitem{shaban2019truncated}
A.~Shaban, C.-A. Cheng, N.~Hatch, and B.~Boots, ``Truncated back-propagation for bilevel optimization,'' in {\em The 22nd International Conference on Artificial Intelligence and Statistics}, pp.~1723--1732, PMLR, 2019.

\bibitem{grazzi2020iteration}
R.~Grazzi, L.~Franceschi, M.~Pontil, and S.~Salzo, ``On the iteration complexity of hypergradient computation,'' in {\em International Conference on Machine Learning}, pp.~3748--3758, PMLR, 2020.

\bibitem{grontas2024big}
P.~D. Grontas, G.~Belgioioso, C.~Cenedese, M.~Fochesato, J.~Lygeros, and F.~D{\"o}rfler, ``Big hype: Best intervention in games via distributed hypergradient descent,'' {\em IEEE Transactions on Automatic Control}, 2024.

\bibitem{jiang2024primal}
L.~Jiang, Q.~Xiao, V.~M. Tenorio, F.~Real-Rojas, A.~G. Marques, and T.~Chen, ``A primal-dual-assisted penalty approach to bilevel optimization with coupled constraints,'' {\em arXiv preprint arXiv:2406.10148}, 2024.

\bibitem{sow2022primal}
D.~Sow, K.~Ji, Z.~Guan, and Y.~Liang, ``A primal-dual approach to bilevel optimization with multiple inner minima,'' {\em arXiv preprint arXiv:2203.01123}, 2022.

\bibitem{shen2023penalty}
H.~Shen and T.~Chen, ``On penalty-based bilevel gradient descent method,'' in {\em International Conference on Machine Learning}, pp.~30992--31015, PMLR, 2023.

\bibitem{mehra2021penalty}
A.~Mehra and J.~Hamm, ``Penalty method for inversion-free deep bilevel optimization,'' in {\em Asian conference on machine learning}, pp.~347--362, PMLR, 2021.

\bibitem{liu2022bome}
B.~Liu, M.~Ye, S.~Wright, P.~Stone, and Q.~Liu, ``Bome! bilevel optimization made easy: A simple first-order approach,'' {\em Advances in neural information processing systems}, vol.~35, pp.~17248--17262, 2022.

\bibitem{kwon2023fully}
J.~Kwon, D.~Kwon, S.~Wright, and R.~D. Nowak, ``A fully first-order method for stochastic bilevel optimization,'' in {\em International Conference on Machine Learning}, pp.~18083--18113, PMLR, 2023.

\bibitem{csiba2017global}
D.~Csiba and P.~Richt{\'a}rik, ``Global convergence of arbitrary-block gradient methods for generalized polyak-$\{$$\backslash$L$\}$ ojasiewicz functions,'' {\em arXiv preprint arXiv:1709.03014}, 2017.

\bibitem{ames2016control}
A.~D. Ames, X.~Xu, J.~W. Grizzle, and P.~Tabuada, ``Control barrier function based quadratic programs for safety critical systems,'' {\em IEEE Transactions on Automatic Control}, vol.~62, no.~8, pp.~3861--3876, 2016.

\bibitem{schechtman2023orthogonal}
S.~Schechtman, D.~Tiapkin, M.~Muehlebach, and E.~Moulines, ``Orthogonal directions constrained gradient method: from non-linear equality constraints to stiefel manifold,'' in {\em The Thirty Sixth Annual Conference on Learning Theory}, pp.~1228--1258, PMLR, 2023.

\bibitem{allibhoy2023control}
A.~Allibhoy and J.~Cort{\'e}s, ``Control barrier function-based design of gradient flows for constrained nonlinear programming,'' {\em IEEE Transactions on Automatic Control}, 2023.

\bibitem{sharifi2025safe}
S.~Sharifi, N.~Abolfazli, E.~Y. Hamedani, and M.~Fazlyab, ``Safe gradient flow for bilevel optimization,'' {\em arXiv preprint arXiv:2501.16520}, 2025.

\bibitem{brunke2024practical}
L.~Brunke, S.~Zhou, M.~Che, and A.~P. Schoellig, ``Practical considerations for discrete-time implementations of continuous-time control barrier function-based safety filters,'' in {\em 2024 American Control Conference (ACC)}, pp.~272--278, IEEE, 2024.

\bibitem{nesterov2018lectures}
Y.~Nesterov, {\em Lectures on convex optimization}, vol.~137.
\newblock Springer, 2018.

\bibitem{agrawal2017discrete}
A.~Agrawal and K.~Sreenath, ``Discrete control barrier functions for safety-critical control of discrete systems with application to bipedal robot navigation.,'' in {\em Robotics: Science and Systems}, vol.~13, pp.~1--10, Cambridge, MA, USA, 2017.

\bibitem{cosner2023robust}
R.~K. Cosner, P.~Culbertson, A.~J. Taylor, and A.~D. Ames, ``Robust safety under stochastic uncertainty with discrete-time control barrier functions,'' {\em arXiv preprint arXiv:2302.07469}, 2023.

\bibitem{lecun2010mnist}
Y.~LeCun, C.~Cortes, C.~Burges, {\em et~al.}, ``{MNIST} handwritten digit database,'' 2010.

\bibitem{franceschi2017forward}
L.~Franceschi, M.~Donini, P.~Frasconi, and M.~Pontil, ``Forward and reverse gradient-based hyperparameter optimization,'' in {\em International conference on machine learning}, pp.~1165--1173, PMLR, 2017.

\bibitem{auslender2010moving}
A.~Auslender, R.~Shefi, and M.~Teboulle, ``A moving balls approximation method for a class of smooth constrained minimization problems,'' {\em SIAM Journal on Optimization}, vol.~20, no.~6, pp.~3232--3259, 2010.

\bibitem{birgin2018augmented}
E.~G. Birgin, G.~Haeser, and A.~Ramos, ``Augmented lagrangians with constrained subproblems and convergence to second-order stationary points,'' {\em Computational Optimization and Applications}, 2018.

\bibitem{abolfazli2025perturbed}
N.~Abolfazli, S.~Sharifi, M.~Fazlyab, and E.~Y. Hamedani, ``Perturbed gradient descent via convex quadratic approximation for nonconvex bilevel optimization,'' {\em arXiv preprint arXiv:2504.17215}, 2025.

\bibitem{abbott2015understanding}
S.~Abbott, {\em Understanding analysis}.
\newblock Springer, 2015.

\bibitem{bartle2000introduction}
R.~G. Bartle and D.~R. Sherbert, {\em Introduction to real analysis}, vol.~2.
\newblock Wiley New York, 2000.

\end{thebibliography}

\end{document}